\newcommand{\NN}{\mathbb{N}}
\newcommand{\RR}{\mathbb{R}}
\newcommand{\ZZ}{\mathbb{Z}}
\newtheorem{definition}{\sc Definition}[section]
\newtheorem{teo}{\sc Theorem}[section]
\newtheorem{lemma}{\sc Lemma }[section]
\newcommand{\vect}[1]{\boldsymbol{#1}}
\newcommand{\dem}{\noindent {\it Proof.} \mbox{}}    
\newcommand{\lqqd}{\hfill $\square$}      
\begin{document}

\title{Homogenized moderately wrinkled shell theory from $3D$ Koiter's linear elasticity}

\author{Pedro Hern\'{a}ndez-Llanos\footnote{Instituto de Ciencias de la Ingenier\'ia, Universidad de O'Higgins, Rancagua, Chile. E-mail: pedro.hernandez@postdoc.uoh.cl}
\and Rajesh Mahadevan\footnote{Departamento de Matem\'atica,
Facultad de Ciencias F\'isicas y Matem\'aticas, Universidad de Concepci\'on, Casilla 160-C, Concepci\'on, Chile.
 E-mail: rmahadevan@udec.cl} \and Ravi Prakash\footnote{
 Departamento de Matem\'atica,
Facultad de Ciencias F\'isicas y Matem\'aticas, Universidad de Concepci\'on, Casilla 160-C, Concepci\'on, Chile. E-mail: rprakash@udec.cl}}

\date{\today}

\maketitle{}


\begin{abstract}
In this paper we derive, by two$-$scale convergence,  periodically wrinked shell models starting from three dimensional linear elasticity, depending of the behaviour of the small parameter $\varepsilon>0$ and $p>1$, differents theories appear. We assume that the mid-surface of the shell is given by $\displaystyle \psi(x_1,x_2)+\varepsilon^p\theta\left(\frac{x_1}{\varepsilon},\frac{x_2}{\varepsilon}\right)\vect{a}_{3}(x_1,x_2)$, where $\theta$ is $[0,1)^2$-periodic function and $p=2$. We also assume that the strain energy of the shell has the Koiter's model. 

{\it Key words and phrases.} Elasticity Theory; Shell theory; Koiter's shell model; wrinkled shell.

{\rm 2010 AMS Subject Classification. 74K20 (primary); 74B20 (secondary).}
\end{abstract}


\section{Introduction}

\medskip
 The importance of this model is that we generalize the moderatelly wrinkled plate theory for Koiter's model, and  in the case when we do not consider wrinkles on the surface we obtain the Koiter's classical plate theory.

\medskip

Throughout the paper $\overline{A}$ or $\{A\}^{-}$ denotes the closure of the set. By a domain we call a bounded open set with Lipschitz boundary. $\vect{I}$ denotes the identity matrix, by $SO(3)$ we donte the rotations in $\RR^3$ and by $so(3)$ the set of antisymmetric matrices $3\times 3$. By $\RR^{n\times n}_{sym}$ we deonte the set of symmetric matrices of the dimension $n\times n$. $\vect{e_1},\vect{e_2}, \vect{e_3}$ are the vectors of the canonical base in $\RR^3$. $\to$ denotes the strong convergence and $\rightharpoonup$ the weak convergence. By $\vect{A}\cdot\vect{B}$ we denote $tr\left(\vect{A}^{T}\vect{B}\right)$. We suppose that the Greek indices $\alpha,\beta, \varrho$ take tha values in the set $\{1,2\}$ while the Latin indices $p,i,j$ take values in the set $\{1,2,3\}$.

\vspace{0.5 cm}

\tableofcontents

\section{Setting up the moderately wrinkled shell problem}   
We begin by introducing some further notation. Set $Y=[0,1)^2$ and $\mathcal{Y}=\RR^2/\ZZ^2$. For all $k\in\NN\cup \{0\}$ the set of all $f\in C^{k}(\RR^2)$ with $D^{\alpha}f(\cdot+z)=D^{\alpha}f$ for all $z\in\ZZ^2$ and all multiindices $\alpha$ of order up to $k$ is denoted by $C^{k}(\mathcal{Y})$. $C^k$ functions with compact support are denoted by a subindex $0$. For any open set $A$, we denote by $L^2(\mathcal{Y})$, $W^{1,2}(\mathcal{Y})$ and $W^{1,2}(A\times\mathcal{Y})$ the Banach spaces obtained as closures of $C^{\infty}(\mathcal{Y})$ and $C^{\infty}(\overline{A}, C^{\infty}(\mathcal{Y}))$ with respect to the norm in $L^2(Y)$, $W^{1,2}(Y)$ and $W^{1,2}(A\times Y)$, respectively. An additional dot (e.g. in $\dot{L}^2(\mathcal{Y})$) denotes functions with average zero over $\mathcal{Y}$.

\vspace{0.5 cm}
\subsubsection*{Geometry of a periodically wrinkled shell}\label{sec:31}
\vspace{0.5 cm}

Let $\Omega$ be a two-dimensional domain with the boundary $\partial\Omega$ of the class $C^3$ in the plane spanned by $\vect{e_1},\vect{e_2}$ and by $x$ we denote $x=(x_1,x_2)$ a generic point in $\Omega$. By $\Omega$ we denote $\Omega^1$. The canonical cell in $\RR^2$ we donote by $Y=[0,1)^2$, we consider the generic point in $Y$ is $y=(y_1,y_2)$ and by  $x_{\varepsilon}=x/\varepsilon$. By a periodically wrinkled shell we mean a shell defined in the following way. Let $\theta:\RR^2\to\RR$ be an $Y-$periodic function of class $C^3$. We call $\theta$ the shape function.  Let $\psi: \RR^2\to\RR^3$ be a function of class $C^3$. We consider a three-dimensional periodically wrinkled shell with wrinkles of scale $\varepsilon $ occupying in its reference configuration the set $\overline{\Omega^{\varepsilon,p}}$, where ${\Omega}^{\varepsilon, p}=\vect{\Theta}^{\varepsilon, p}(\Omega);$ the mapping $\vect{\Theta}^{\varepsilon, p}:\{\Omega\}\to \RR^3$ is given by
\begin{equation*}
\vect{\Theta}^{\varepsilon, p}(x)=\displaystyle\psi(x)+\varepsilon^p\theta\left(x_{\varepsilon}\right)\vect{a}_{3}(x),
\end{equation*}
for all $x\in\overline{\Omega}$, where $\vect{a}_3$ is a unit normal vector to the graph of $\psi$. At each point  $x$ of $\overline{\Omega}$ the vector $\vect{a}_3$ is given by
\begin{equation}\label{eq:0}
\vect{a}_3(x)=\frac{\vect{a}_1(x)\wedge \vect{a}_2(x)}{|\vect{a}_1(x)\wedge \vect{a}_2(x)|}=\frac{\vect{a}_1(x)\wedge \vect{a}_2(x)}{\sqrt{a(x)}},\quad\text{where,}\,\vect{a}_{\alpha}(x)=\partial_{\alpha} \psi(x),\quad\alpha=1,2.
\end{equation}

\vspace{0.5 cm}
We are interested in the case with $p=2$, i.e., $\Omega^{\varepsilon}:=\vect{\Theta}^{\varepsilon}(\Omega)$, where, $\vect{\Theta}^{\varepsilon}(x):=\vect{\Theta}^{\varepsilon,2}(x)$ which is called moderately wrinkled shell.

\vspace{0.5 cm}

\subsection{The Koiter's shell model}\label{sec:32}

\vspace{1.0 cm}

\begin{equation}\label{eq:alph2}
\vect{a}^{\varepsilon}_{\alpha}(x):=\partial_{\alpha}\Theta^{\varepsilon}(x)=\vect{a}_{\alpha}(x)+\varepsilon\partial_{\alpha}\theta (x_{\varepsilon})\vect{a}_3(x)+\varepsilon^2 \theta (x_{\varepsilon})\partial_{\alpha}\vect{a}_3(x)
\end{equation}
for $\alpha\in\{1,2\}$.
The next step is to find a explicit form for $\vect{a}_3^{\varepsilon}(x)$. For this end, we define
\begin{equation*}
\vect{a}_3^{\varepsilon}(x)\nonumber:=\displaystyle\frac{\vect{a}_1^{\varepsilon}(x)\wedge \vect{a}_2^{\varepsilon}(x)}{|\vect{a}_1^{\varepsilon}(x)\wedge \vect{a}_2^{\varepsilon}(x) |},
\end{equation*}
 then we obtain
\begin{align}
\vect{a}_3^{\varepsilon}(x)&\nonumber=\displaystyle\frac{1}{\sqrt{a_{\varepsilon}(x)}}\left[(\vect{a}_1\wedge \vect{a}_2)(x)+\varepsilon\partial_{2}\theta (x_{\varepsilon}) (\vect{a}_1\wedge \vect{a}_3)(x)+\varepsilon^2 \theta (x_{\varepsilon})(\vect{a}_1\wedge \partial_2 \vect{a}_3)(x) \right.\\
&\nonumber\quad+\varepsilon\partial_{1}\theta (x_{\varepsilon})(\vect{a}_3\wedge \vect{a}_2)(x)+\varepsilon^3\theta(x_{\varepsilon})\partial_{1}\theta (x_{\varepsilon})(\vect{a}_3\wedge \partial_{2} \vect{a}_3)(x)+\varepsilon^2 \theta(x_{\varepsilon})(\partial_1\vect{a}_3\wedge \vect{a}_2)(x)\\
&\label{eq:5}\left.\quad+\varepsilon^3\theta(x_{\varepsilon})\partial_{2}\theta(x_{\varepsilon})(\partial_{1}\vect{a}_3\wedge \vect{a}_3)(x)+\varepsilon^4\theta^2(x_{\varepsilon})(\partial_{1}\vect{a}_3\wedge \partial_{2}\vect{a}_3)(x)\right],
\end{align}
where $\sqrt{a_{\varepsilon}(x)}:=|\vect{a}_1^{\varepsilon}(x)\wedge \vect{a}_2^{\varepsilon}(x) |$.

\vspace{0.5 cm}

Let be $\vect{k}_{ij}(x)=(\vect{a}_{i}\wedge \vect{a}_{j})(x)$, $\vect{\ell}_{i\alpha}(x)=(\vect{a}_{i}\wedge \partial_{\alpha}\vect{a}_3)(x)$,  $c_{\alpha}(x_{\varepsilon})=\theta(x_{\varepsilon})\partial_{\alpha}\theta (x_{\varepsilon})$ for $i,j\in\{1,2,3\}$, $\alpha\in\{1, 2\}$ and $\vect{m}(x)=(\partial_{1}\vect{a}_3\wedge\partial_{2}\vect{a}_3)(x)$, then (\ref{eq:5}) can be rewritten as follow
\begin{align}
\vect{a}_3^{\varepsilon}(x)&\nonumber=\displaystyle\frac{1}{\sqrt{a_{\varepsilon}(x)}}\left[\vect{k}_{12}(x)+\varepsilon\left(\partial_{2}\theta (x_{\varepsilon})\vect{k}_{13}(x)+\partial_{1}\theta (x_{\varepsilon})\vect{k}_{32}(x)\right)  \right.\\
&\left.\label{eq:6}\quad+\varepsilon^3\left(c_1(x_{\varepsilon})\vect{\ell}_{32}(x)-c_2(x_{\varepsilon})\vect{\ell}_{31}(x) \right)+\varepsilon^2\theta(x_{\varepsilon})\left(\vect{\ell}_{12}(x)-\vect{\ell}_{21}(x)\right)+\varepsilon^4\theta^2(x_{\varepsilon})\vect{m}(x)\right],
\end{align}

therefore
\begin{equation}\label{eq:7}
\vect{a}_3^{\varepsilon}(x)=\displaystyle\frac{1}{\sqrt{a_{\varepsilon}(x)}}\left[\vect{k}_{12}(x)+\varepsilon\left(\partial_{2}\theta (x_{\varepsilon})\vect{k}_{13}(x)+\partial_{1}\theta (x_{\varepsilon})\vect{k}_{32}(x)\right)+ \mathcal{O}(\varepsilon^2)\right].
\end{equation}
In the other hand, since $\vect{k}_{12}$ are orthogonal with $\vect{k}_{13}$ and $\vect{k}_{32}$, we have that
\begin{equation*}
a_{\varepsilon}(x)=|\vect{k}_{12}|^2+\varepsilon^2|\partial_{2}\theta (x_{\varepsilon})\vect{k}_{13}(x)+\partial_{1}\theta (x_{\varepsilon})\vect{k}_{32}(x)|^2+\mathcal{O}(\varepsilon^3).
\end{equation*}
By Taylor expansion from equation above, we obtain
\begin{equation}\label{eq:8}
\displaystyle\frac{1}{\sqrt{a_\varepsilon(x)}}=\displaystyle\frac{1}{\sqrt{a(x)}}-\frac{1}{2}\frac{\left|\partial_{2}\theta (x_{\varepsilon})\vect{k}_{13}(x)+\partial_{1}\theta (x_{\varepsilon})\vect{k}_{32}(x)\right|^2}{\sqrt{a^3(x)}}\varepsilon^2+\mathcal{O}(\varepsilon^3)
\end{equation}
and
\begin{equation}\label{eq:8***}
\sqrt{a_{\varepsilon}(x)}=\displaystyle\sqrt{a(x)}+\frac{1}{2}\frac{\left|\partial_{2}\theta (x_{\varepsilon})\vect{k}_{13}(x)+\partial_{1}\theta (x_{\varepsilon})\vect{k}_{32}(x)\right|^2}{\sqrt{a^3(x)}}\varepsilon^2+\mathcal{O}(\varepsilon^3).
\end{equation}
Finally, from (\ref{eq:7}), (\ref{eq:8}) and  (\ref{eq:0}) we deduce
\begin{align}
\vect{a}_3^{\varepsilon}(x)&\nonumber=\vect{a}_3(x)+\displaystyle\frac{\left(\partial_{2}\theta (x_{\varepsilon})\vect{k}_{13}(x)+\partial_{1}\theta (x_{\varepsilon})\vect{k}_{32}(x)\right)}{\sqrt{a(x)}}\varepsilon-\displaystyle\frac{1}{2}\frac{\left|\partial_{2}\theta (x_{\varepsilon})\vect{k}_{13}(x)+\partial_{1}\theta (x_{\varepsilon})\vect{k}_{32}(x)\right|^2}{a(x)}\vect{a}_{3}\varepsilon^2\\
&\label{eq:9}-\displaystyle\frac{1}{2}\frac{\left|\partial_{2}\theta (x_{\varepsilon})\vect{k}_{13}(x)+\partial_{1}\theta (x_{\varepsilon})\vect{k}_{32}(x)\right|^2}{a(x)\sqrt{a(x)}}\left(\partial_{2}\theta (x_{\varepsilon})\vect{k}_{13}(x)+\partial_{1}\theta (x_{\varepsilon})\vect{k}_{32}(x)\right)\varepsilon^3+\mathcal{O}(\varepsilon^4).
\end{align}
Now,  for $\alpha\in\{1,2\}$ we obtain
\begin{align}
\partial_{\alpha}\vect{a}_3^{\varepsilon}(x)&\nonumber=\partial_{\alpha}\vect{a}_3(x)+\displaystyle\frac{1}{\sqrt{a(x)}}\left[\partial_{\alpha 2}\theta (x_{\varepsilon})\vect{k}_{13}(x)+\partial_{\alpha 1}\theta (x_{\varepsilon})\vect{k}_{32}(x)\right]\\
&\quad+\frac{1}{\sqrt{a(x)}}\left[\nonumber\partial_{2}\theta (x_{\varepsilon})\partial_{\alpha}\vect{k}_{13}(x)+\partial_{1}\theta (x_{\varepsilon})\partial_{\alpha}\vect{k}_{32}(x)\right]\varepsilon\\
&\label{eq:10}\quad-\displaystyle\frac{\partial_{\alpha}a(x)}{2a(x)\sqrt{a(x)}}\left[\partial_{2}\theta (x_{\varepsilon})\vect{k}_{13}(x)+\partial_{1}\theta (x_{\varepsilon})\vect{k}_{32}(x)\right]\varepsilon+\mathcal{O}(\varepsilon^2).
\end{align}
From (\ref{eq:alph2}) we conclude that for $\alpha=1,2$
\begin{equation*}
\vect{a}_{\alpha}^{\varepsilon}(x)=\vect{a}_{\alpha}(x)+\varepsilon\partial_{\alpha}\theta (x_{\varepsilon})\vect{a}_3(x)-\varepsilon^2\theta (x_{\varepsilon})b_{\alpha\sigma}\vect{a}^{\sigma}(x)
\end{equation*}
and
\begin{align}
\vect{a}_3^{\varepsilon}(x)&\nonumber=\vect{a}_3(x)+\displaystyle\frac{\left(\partial_{2}\theta (x_{\varepsilon})\vect{k}_{13}(x)+\partial_{1}\theta (x_{\varepsilon})\vect{k}_{32}(x)\right)}{\sqrt{a(x)}}\varepsilon-\displaystyle\frac{1}{2}\frac{\left|\partial_{2}\theta (x_{\varepsilon})\vect{k}_{13}(x)+\partial_{1}\theta (x_{\varepsilon})\vect{k}_{32}(x)\right|^2}{a(x)}\vect{a}_{3}(x)\varepsilon^2\\
&\label{eq:12}-\displaystyle\frac{1}{2}\frac{\left|\partial_{2}\theta (x_{\varepsilon})\vect{k}_{13}(x)+\partial_{1}\theta (x_{\varepsilon})\vect{k}_{32}(x)\right|^2}{a(x)\sqrt{a(x)}}\left(\partial_{2}\theta (x_{\varepsilon})\vect{k}_{13}(x)+\partial_{1}\theta (x_{\varepsilon})\vect{k}_{32}(x)\right)\varepsilon^3+\mathcal{O}(\varepsilon^4).
\end{align}

We denote,
\begin{equation}\label{eq:13}
\sqrt{a_{\varepsilon}(x)}=\det \nabla \vect{\Theta}^{\varepsilon}(x)= \vect{a}_{1}^{\varepsilon}(x)\cdot\left(\vect{a}_{2}^{\varepsilon}\wedge \vect{a}_{3}^{\varepsilon}\right)(x),
\end{equation}
from (\ref{eq:alph2}) and (\ref{eq:12}) we have,
\begin{align*}
\sqrt{a_{\varepsilon}(x)}&= \vect{a}_{1}^{\varepsilon}(x)\cdot\left(\vect{a}_{2}^{\varepsilon}\wedge \vect{a}_{3}^{\varepsilon}\right)(x)=\left(\vect{a}_{1}^{\varepsilon}\wedge \vect{a}_{2}^{\varepsilon}\right)(x)\cdot \vect{a}_{3}^{\varepsilon}(x)\\
&=\left\{\left[\vect{a}_{1}(x)+\varepsilon\partial_{1}\theta (x_{\varepsilon})\vect{a}_3(x)+\mathcal{O}(\varepsilon^2)\right]\wedge \left[\vect{a}_{2}(x)+\varepsilon\partial_{2}\theta (x_{\varepsilon})\vect{a}_3(x)+\mathcal{O}(\varepsilon^2)\right]\right\}\cdot \vect{a}_{3}^{\varepsilon}(x)\\
&=\sqrt{a(x)}+\mathcal{O}(\varepsilon^2),
\end{align*}
and from here deduce the last equality of the lemma.
For the contravariant form of $[\vect{a}^{\varepsilon}_{i}(x)]$, denoted by $[{}^{\varepsilon}\vect{a}^{i}(x)]$ which is defined as follows
\begin{equation}\label{countervariant1}
{}^{\varepsilon}\vect{a}^{i}(x)\cdot \vect{a}^{\varepsilon}_{j}(x)=\delta_{ij},\quad x\in \Omega,\, i,j=1,2,3;
\end{equation} 
we have
\begin{equation}\label{eq:14**}
\left(\nabla \vect{\Theta}^{\varepsilon}\right)^{-1}(x)=(\vect{a}^{\varepsilon}_{1}|\vect{a}^{\varepsilon}_2|\vect{a}^{\varepsilon}_3)^{-1}(x)=({}^{\varepsilon}\vect{a}^{1}|{}^{\varepsilon}\vect{a}^{2}|{}^{\varepsilon}\vect{a}^{3})(x)=\frac{1}{\sqrt{a_{\varepsilon}}}\begin{pmatrix}
      (\vect{a}_{2}^{\varepsilon}\wedge \vect{a}_{3}^{\varepsilon})^{T} &  \\
      (\vect{a}_{3}^{\varepsilon}\wedge \vect{a}_{1}^{\varepsilon})^{T}  &  \\
      (\vect{a}_{1}^{\varepsilon}\wedge \vect{a}_{2}^{\varepsilon})^{T} & 
   \end{pmatrix}(x).
\end{equation}
 For $i=1,2,3$, we need compute the values for each ${}^{\varepsilon}\vect{a}^{i}(x)$ to finally for $\alpha, \beta, \varrho=1,2$ find the fundamental forms $(a^{\varepsilon}_{\alpha\beta}), (b^{\varepsilon}_{\alpha\beta}), (c^{\varepsilon}_{\alpha\beta})$  of the manifold $\Omega^{\varepsilon}$ and the Christoffel's symbols ${}^{\varepsilon}\Gamma^{\varrho}_{\alpha\beta}$ which are defined as follows:
\begin{align}
a^{\varepsilon}_{\alpha\beta}(x)&\label{fundform1}=\vect{a}^{\varepsilon}_{\alpha}(x)\cdot\vect{a}^{\varepsilon}_{\beta}(x),\\
b^{\varepsilon}_{\alpha\beta}(x)&\label{fundaform2}=\vect{a}^{\varepsilon}_{3}(x)\cdot\partial_{\beta}\vect{a}^{\varepsilon}_{\alpha}(x),\\
{}^{\varepsilon}\Gamma^{\sigma}_{\alpha\beta}(x)&\label{cristoffell1}={}^{\varepsilon}\vect{a}^{\varrho}(x)\cdot\partial_{\alpha}\vect{a}^{\varepsilon}_{\beta}(x),\\
c^{\varepsilon}_{\alpha\beta}(x)&\label{fundaform3}={}^{\varepsilon}b^{\lambda}_{\alpha}(x)b^{\varepsilon}_{\lambda\beta}(x),
\end{align}
with
\begin{equation*}
{}^{\varepsilon}b^{\beta}_{\alpha}(x)={}^{\varepsilon}a^{\beta\varrho}(x)b^{\varepsilon}_{\varrho\alpha}(x)\quad\text{and}\quad [{}^{\varepsilon}a^{\alpha\beta}(x)]=[a^{\varepsilon}_{\alpha\beta}(x)]^{-1}.
\end{equation*}

To this aim, we can use the fact that for $\alpha=1,2$
\begin{align*}
\vect{a}_{\alpha}\cdot\left(\vect{a}_1\wedge \vect{a}_{3}\right)=-\sqrt{a}\,\delta_{2\alpha},\\
\vect{a}_{\alpha}\cdot\left(\vect{a}_3\wedge \vect{a}_{2}\right)=-\sqrt{a}\,\delta_{1\alpha}.
\end{align*}

After simple calculation we obtain for $\alpha, \beta, \sigma=1,2$ and $i=1,2,3$ that
\begin{align*}
a^{\varepsilon}_{\alpha\beta}&=a_{\alpha\beta}+[\displaystyle \partial_{\alpha}\theta(x_{\varepsilon})\partial_{\beta}\theta(x_{\varepsilon})-\theta(x_{\varepsilon})(b_{\beta\sigma}\delta_{\sigma\alpha}+b_{\alpha\sigma}\delta_{\sigma\beta})]\varepsilon^2+\mathcal{O}(\varepsilon^3)\\
a^{\varepsilon}_{i3}&
=\delta_{i3}+\displaystyle\frac{|\partial_2\theta(x_{\varepsilon})\vect{k}_{13}+\partial_1\theta(x_{\varepsilon})\vect{k}_{32}|^2}{a}\delta_{i3}\,\varepsilon^2+\mathcal{O}(\varepsilon^3),
\end{align*}
rewriting we have that for $\alpha, \beta=1,2$ and $i=1,2,3$ that
\begin{align}
a^{\varepsilon}_{\alpha\beta}&\label{matrix1}=a_{\alpha\beta}+[\displaystyle \partial_{\alpha}\theta(x_{\varepsilon})\partial_{\beta}\theta(x_{\varepsilon})-2\theta(x_{\varepsilon})b_{\alpha\beta}]\varepsilon^2+\mathcal{O}(\varepsilon^3)\\
a^{\varepsilon}_{i3}&\label{matrix2}
=\delta_{i3}+\displaystyle\frac{|\partial_2\theta(x_{\varepsilon})\vect{k}_{13}+\partial_1\theta(x_{\varepsilon})\vect{k}_{32}|^2}{a}\delta_{i3}\,\varepsilon^2+\mathcal{O}(\varepsilon^3).
\end{align}
From (\ref{matrix1}) and (\ref{matrix2}) we deduce that
\begin{equation*}
(a^{\varepsilon}_{ij})=\textbf{G}+\displaystyle \varepsilon^2\,\textbf{H}+\mathcal{O}(\varepsilon^3),
\end{equation*}
therefore
\begin{equation*}
({}^{\varepsilon}a^{ij})=(a^{\varepsilon}_{ij})^{-1}=\textbf{G}^{-1}-\displaystyle \varepsilon^2\,\textbf{G}^{-1}\textbf{H}\textbf{G}^{-1}+\mathcal{O}(\varepsilon^3).
\end{equation*}
From the above equation we conclude that
\begin{align}
{}^{\varepsilon}a^{\alpha\beta}&\nonumber=\textbf{G}^{-1}_{\alpha\beta}-\displaystyle \varepsilon^2\,\textbf{G}^{-1}_{\alpha\beta}\textbf{H}_{\alpha\beta}\textbf{G}^{-1}_{\alpha\beta}+\mathcal{O}(\varepsilon^3)\\
&\label{eq:1687}=a^{\alpha\beta}+\left(2a^{\alpha\sigma}b^{\beta}_{\sigma}-a^{\alpha\sigma}a^{\beta\tau}\partial_{\tau}\theta(x_{\varepsilon})\partial_{\sigma}\theta(x_{\varepsilon})\right)\displaystyle \varepsilon^2+\mathcal{O}(\varepsilon^3).
\end{align}
By the same way we can obtain,
\begin{equation}\label{eq:1787}
{}^{\varepsilon} a^{i3}=\delta^{i3}+\displaystyle\frac{|\partial_2\theta(x_{\varepsilon}) \vect{k}_{13}+\partial_{1}\theta(x_{\varepsilon}) \vect{k}_{32}|^2}{a}\delta^{i3}\varepsilon^2+\mathcal{O}(\varepsilon^3).
\end{equation}
Now, from (\ref{eq:1687}) and (\ref{eq:1787}) we compute 
\begin{align}
{}^{\varepsilon}\vect{a}^{\alpha}&\label{eq:1887}
={}^{\varepsilon}\vect{a}^{\alpha i}\cdot \vect{a}^{\varepsilon}_{i}=\vect{a}^{\alpha}+\left(b^{\alpha}_{\sigma}\vect{a}^{\sigma}-a^{\alpha\sigma}\partial_{\sigma}\theta(x_{\varepsilon})\partial_{\tau}\theta(x_{\varepsilon})\vect{a}^{\tau}\right) \varepsilon^2+\mathcal{O}(\varepsilon^3),\\
{}^{\varepsilon}\vect{a}^3&\nonumber=\vect{a}^3+\displaystyle\frac{\left(\partial_2\theta(x_{\varepsilon}) \vect{k}_{13}+\partial_{1}\theta(x_{\varepsilon}) \vect{k}_{32}\right)}{\sqrt{a}}\varepsilon+\displaystyle\frac{|\partial_2\theta(x_{\varepsilon}) \vect{k}_{13}+\partial_{1}\theta(x_{\varepsilon}) \vect{k}_{32}|^2}{a}\vect{a}^3 \varepsilon^2\\
&\label{eq:1987}\quad+\displaystyle\frac{|\partial_2\theta(x_{\varepsilon}) \vect{k}_{13}+\partial_{1}\theta(x_{\varepsilon}) \vect{k}_{32}|^2}{a\sqrt{a}}\left(\partial_2\theta(x_{\varepsilon}) \vect{k}_{13}+\partial_{1}\theta(x_{\varepsilon}) \vect{k}_{32}\right)\varepsilon^3+\mathcal{O}(\varepsilon^4).
\end{align}
\lqqd

\newpage

We assume that $\vect{f}_{\varepsilon}(x)=f^{i}_{\varepsilon}(x)\vect{e}_i:\Omega\to\RR^3$ is a function describing a force density acting on the middle surface $\Omega^{\varepsilon}$ of the corrugated shell; we assume that $\vect{f}_{\varepsilon}\in L^2(\Omega)^3$. We consider the simplest boundary conditions, i.e., we assume that the corrugated shell is clamped on its lateral boundary. The the equilibrium displacement $\vect{u}^{\varepsilon}(x)=u^{\varepsilon}_{i}(x)\vect{a}^{i}_{\varepsilon}(x)$ of the point $\Theta^{\varepsilon, p}(x)\in\Omega^{\varepsilon}$ satisfies:
\begin{equation*}
u^{\varepsilon}_{i}(x)=\partial_{\alpha}u^{\varepsilon}_{3}(x)=0,\quad x\in\partial\Omega,\, i=1,2,3,\, \alpha=1,2.
\end{equation*}

\vspace{0.5 cm}

The starting point of our analysis is the three dimensional linear elasticity problem for the periodically wrinkled shell. The unknow $\vect{u}^{\varepsilon}=({u}_{i}^{\varepsilon})$ satisfies the following variational problem in cartesian coordinates $\mathcal{P}(\Omega^{\varepsilon})$: Find $\vect{u}^{\varepsilon}\in \vect{H}:=H^1_0(\Omega)\times H^1_0(\Omega)\times H^2_0(\Omega)$
 \begin{equation}\label{cartesian}
 B^{\varepsilon}(\vect{u}^{\varepsilon}, \vect{v})=L^{\varepsilon}(\vect{v}),\quad \forall \vect{v}\in\vect{H},
 \end{equation}
 where
\begin{equation}\label{cartesian1}
L^{\varepsilon}(\vect{v})=\int_{\Omega}\sqrt{a_{\varepsilon}} \vect{f}_{\varepsilon}\cdot \vect{v}\,dx,
\end{equation}
and for $\vect{u}, \vect{v}$ in $\vect{H}$,
\begin{equation}\label{cartesian2}
B^{\varepsilon}(\vect{u}, \vect{v})=\displaystyle\int_{\Omega}b_{\varepsilon}^{\alpha\beta\varrho\sigma}\left[d\gamma_{\varrho\sigma}^{\varepsilon}(\vect{u})\gamma_{\alpha\beta}^{\varepsilon}(\vect{v})+\displaystyle\frac{d^3}{3}\Gamma_{\varrho\sigma}^{\varepsilon}(\vect{u})\Gamma_{\alpha\beta}^{\varepsilon}(\vect{v})\right]\sqrt{a_{\varepsilon}}\,dx.
\end{equation}
Here $2d$ is the thickness of the shell, and for $\alpha, \beta,\varrho, \sigma=1,2$ and $x\in\Omega$,
\begin{equation}\label{tensor}
b_{\varepsilon}^{\alpha\beta\varrho\sigma}=\displaystyle\frac{4\lambda\mu}{\lambda+2\mu}a^{\alpha\beta}_{\varepsilon}(x)a^{\varrho\sigma}_{\varepsilon}(x)+2\mu\left[a^{\alpha\varrho}_{\varepsilon}(x)a^{\beta\sigma}_{\varepsilon}(x)+a^{\alpha\sigma}_{\varepsilon}(x)a^{\beta\varrho}_{\varepsilon}(x)\right],
\end{equation}
where $\lambda>0$ and $\mu>0$ are the Lame constants of the material, and for $\vect{u}$ in $\vect{H}$ and $\alpha, \beta,\varrho, \sigma=1,2$,
\begin{align}
\gamma^{\varepsilon}_{\alpha\beta}(\vect{u})&\label{tensor1}=e_{\alpha\beta}(\vect{u})-{}^{\varepsilon}\Gamma^{\varrho}_{\alpha\beta}u_{\varrho}-b^{\varepsilon}_{\alpha\beta}u_3,\quad\text{with}\,\, e_{\alpha\beta}(\vect{u})=\displaystyle\frac{1}{2}\left(\partial_{\alpha}u_{\beta}+\partial_{\beta}u_{\alpha}\right),\\
\Gamma^{\varepsilon}_{\alpha\beta}(\vect{u})&\label{tensor2}=\partial_{\alpha\beta}u_3-{}^{\varepsilon}\Gamma_{\alpha\beta}^{\varrho}\partial_{\varrho}u_3+{}^{\varepsilon}b^{\varrho}_{\beta}(\partial_{\alpha}u_{\varrho}-{}^{\varepsilon}\Gamma^{\sigma}_{\varrho\alpha}u_{\sigma})-c^{\varepsilon}_{\alpha\beta}u_3\\
&\nonumber\quad+{}^{\varepsilon}b^{\varrho}_{\alpha}(\partial_{\beta}u_{\varrho}-{}^{\varepsilon}\Gamma^{\sigma}_{\varrho\beta}u_{\sigma})+(\partial_{\alpha}{}^{\varepsilon}b^{\varrho}_{\beta}+{}^{\varepsilon}\Gamma^{\varrho}_{\alpha\sigma}{}^{\varepsilon}b^{\sigma}_{\beta}-{}^{\varepsilon}\Gamma^{\sigma}_{\alpha\beta}{}^{\varepsilon}b^{\varrho}_{\sigma})u_{\varrho}.
\end{align}
We are interested in to find the problem limit of (\ref{cartesian}) as $\varepsilon\to 0$ in a suitable sense.

\vspace{1.0 cm}

\begin{teo}\label{teo:332}
The functions $\gamma^{\varepsilon}_{\alpha\beta}(\vect{u})$ and $\Gamma^{\varepsilon}_{\alpha\beta}(\vect{u})$  defined in (\ref{tensor1}) and (\ref{tensor2}) are of the form
\begin{align}
\gamma^{\varepsilon}_{\alpha\beta}(\vect{u})&\label{anouncedeq1}=\gamma_{\alpha\beta}(\vect{u})-\partial_{\alpha\beta}\theta (x_{\varepsilon})u_3+\varepsilon P^{\varepsilon}_{\alpha\beta}(\vect{u}),\\
\Gamma^{\varepsilon}_{\alpha\beta}(\vect{u})&\label{anouncedeq2}=\Gamma_{\alpha\beta}(\vect{u})+\varepsilon^{-1}a^{\varrho\lambda}\partial_{\alpha\lambda\beta}\theta(x_{\varepsilon})u_{\varrho}+Q^{\varepsilon}_{\alpha\beta}(\vect{u})+\varepsilon R^{\varepsilon} _{\alpha\beta}(\vect{u})
\end{align}
and there exists constant $C$ such that for $\alpha, \beta=1,2$ and for all $\varepsilon>0$,
\begin{equation}\label{boundresidualstraintensor}
|P^{\varepsilon} _{\alpha\beta}(\vect{u})|_{L^2(\Omega)}\leq C |\vect{u}|_{L^2(\Omega)^3}\quad|R^{\varepsilon} _{\alpha\beta}(\vect{u})|_{L^2(\Omega)}\leq C |\vect{u}|_{H^1(\Omega)^3},\quad\text{for all}\,\,\vect{u}\in \vect{H},
\end{equation} 
 and the auxiliary functions $\gamma_{\alpha\beta}(\vect{u}), \Gamma_{\alpha\beta}(\vect{u})$ and $Q^{\varepsilon}_{\alpha\beta}(\vect{u})$ is defined as 
 \begin{align}
 \gamma_{\alpha\beta}(\vect{u})&\label{tensorold1}=e_{\alpha\beta}(\vect{u})-\Gamma^{\varrho}_{\alpha\beta}u_{\varrho}-b
 _{\alpha\beta}u_3,\\
 \Gamma_{\alpha\beta}(\vect{u})&\nonumber=\partial_{\alpha\beta}u_3-\Gamma_{\alpha\beta}^{\varrho}\partial_{\varrho}u_3+b^{\varrho}_{\beta}(\partial_{\alpha}u_{\varrho}-\Gamma^{\sigma}_{\varrho\alpha}u_{\sigma})-c_{\alpha\beta}u_3\\
&\label{tensorold2}\quad+b^{\varrho}_{\alpha}(\partial_{\beta}u_{\varrho}-\Gamma^{\sigma}_{\varrho\beta}u_{\sigma})+(\partial_{\alpha}b^{\varrho}_{\beta}+\Gamma^{\varrho}_{\alpha\sigma}b^{\sigma}_{\beta}-\Gamma^{\sigma}_{\alpha\beta}b^{\varrho}_{\sigma})u_{\varrho},\\
 Q^{\varepsilon}_{\alpha\beta}(\vect{u})&\nonumber=a^{\varrho\lambda}[\partial_{\lambda\beta}\theta(x_{\varepsilon})\partial_{\alpha}u_{\varrho}+\partial_{\lambda\alpha}\theta(x_{\varepsilon})\partial_{\beta}u_{\varrho}]-a^{\lambda\varrho}\partial_{\varrho\alpha}\theta(x_{\varepsilon})\partial_{\lambda\beta}\theta(x_{\varepsilon})u_3\\
 &\nonumber-[b^{\lambda}_{\alpha}\partial_{\lambda\beta}\theta(x_{\varepsilon})+b^{\varrho}_{\beta}\partial_{\varrho\alpha}\theta(x_{\varepsilon})]u_3+\partial_{\alpha}a^{\varrho\lambda}\partial_{\lambda\beta}\theta(x_{\varepsilon})u_{\varrho}\\
 &\nonumber-a^{\varrho\lambda}[\partial_{\lambda\sigma}\theta(x_{\varepsilon})\Gamma^{\sigma}_{\alpha\beta}u_{\varrho}+\partial_{\lambda\alpha}\theta(x_{\varepsilon})\Gamma^{\sigma}_{\varrho\beta}u_{\sigma}]\\
 &+\label{tensornew2021}\displaystyle\frac{\left(\partial_{\alpha2}\theta(x_{\varepsilon}) \vect{k}_{13}+\partial_{\alpha1}\theta(x_{\varepsilon}) \vect{k}_{32}\right)}{\sqrt{a}}a^{\varrho\lambda}\partial_{\lambda}a_{\beta}u_{\varrho}.
 \end{align}
 
\end{teo}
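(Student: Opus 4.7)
The plan is a direct substitution proof: plug the $\varepsilon$-expansions of the geometric quantities already derived --- $\vect{a}^{\varepsilon}_{\alpha}$ in (\ref{eq:alph2}), $\vect{a}^{\varepsilon}_{3}$ in (\ref{eq:12}), ${}^{\varepsilon}a^{\alpha\beta}$ in (\ref{eq:1687}), and ${}^{\varepsilon}\vect{a}^{\alpha}, {}^{\varepsilon}\vect{a}^{3}$ in (\ref{eq:1887})--(\ref{eq:1987}) --- into the definitions (\ref{cristoffell1}), (\ref{fundaform2}), (\ref{fundaform3}) of ${}^{\varepsilon}\Gamma^{\varrho}_{\alpha\beta}$, $b^{\varepsilon}_{\alpha\beta}$, ${}^{\varepsilon}b^{\varrho}_{\beta}$ and $c^{\varepsilon}_{\alpha\beta}$, and then into (\ref{tensor1})--(\ref{tensor2}). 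The only delicate point is that $\partial_{\alpha}[\phi(x_{\varepsilon})]=\varepsilon^{-1}(\partial_{\alpha}\phi)(x_{\varepsilon})$, so each Cartesian derivative lowers the $\varepsilon$-order of a factor in $\theta$ by one, while uniform boundedness of $(\partial^{k}\theta)(x_{\varepsilon})$ for $|k|\le 3$ is guaranteed by $Y$-periodicity together with $\theta\in C^{3}(\mathcal{Y})$.

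For the membrane strain, I first differentiate (\ref{eq:alph2}) to obtain
$$\partial_{\beta}\vect{a}^{\varepsilon}_{\alpha}=\partial_{\beta}\vect{a}_{\alpha}+(\partial_{\alpha\beta}\theta)(x_{\varepsilon})\vect{a}_{3}+\varepsilon\bigl[(\partial_{\alpha}\theta)(x_{\varepsilon})\partial_{\beta}\vect{a}_{3}+(\partial_{\beta}\theta)(x_{\varepsilon})\partial_{\alpha}\vect{a}_{3}\bigr]+\mathcal{O}(\varepsilon^{2}).$$
Dotting with ${}^{\varepsilon}\vect{a}^{\varrho}=\vect{a}^{\varrho}+\mathcal{O}(\varepsilon^{2})$ from (\ref{eq:1887}) and using $\vect{a}^{\varrho}\cdot\vect{a}_{3}=0$ yields ${}^{\varepsilon}\Gamma^{\varrho}_{\alpha\beta}=\Gamma^{\varrho}_{\alpha\beta}+\varepsilon[\cdots]$; pairing instead with $\vect{a}^{\varepsilon}_{3}=\vect{a}_{3}+\mathcal{O}(\varepsilon)$ from (\ref{eq:12}) and using $\vect{a}_{3}\cdot\vect{a}_{3}=1$ gives $b^{\varepsilon}_{\alpha\beta}=b_{\alpha\beta}+(\partial_{\alpha\beta}\theta)(x_{\varepsilon})+\varepsilon[\cdots]$. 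Inserting these expansions into (\ref{tensor1}) produces (\ref{anouncedeq1}), where $P^{\varepsilon}_{\alpha\beta}(\vect{u})$ is manifestly a multiplication operator acting on $u_{\varrho}$ and $u_{3}$ alone --- no derivatives of $\vect{u}$ --- by $L^{\infty}$-bounded coefficients, so $|P^{\varepsilon}_{\alpha\beta}(\vect{u})|_{L^{2}(\Omega)}\le C|\vect{u}|_{L^{2}(\Omega)^{3}}$ is immediate.

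For the bending tensor I expand ${}^{\varepsilon}b^{\varrho}_{\beta}={}^{\varepsilon}a^{\varrho\sigma}b^{\varepsilon}_{\sigma\beta}=b^{\varrho}_{\beta}+a^{\varrho\lambda}(\partial_{\lambda\beta}\theta)(x_{\varepsilon})+\mathcal{O}(\varepsilon)$ using (\ref{eq:1687}), and obtain a similar expansion for $c^{\varepsilon}_{\alpha\beta}$ from (\ref{fundaform3}). The singular $\varepsilon^{-1}$ term in (\ref{anouncedeq2}) comes exclusively from the contribution $\partial_{\alpha}{}^{\varepsilon}b^{\varrho}_{\beta}\,u_{\varrho}$ in (\ref{tensor2}): differentiating the $O(1)$ correction $a^{\varrho\lambda}(\partial_{\lambda\beta}\theta)(x_{\varepsilon})$ produces $\varepsilon^{-1}a^{\varrho\lambda}(\partial_{\alpha\lambda\beta}\theta)(x_{\varepsilon})+(\partial_{\alpha}a^{\varrho\lambda})(\partial_{\lambda\beta}\theta)(x_{\varepsilon})+\mathcal{O}(1)$, which supplies both the singular term in (\ref{anouncedeq2}) and the summand $(\partial_{\alpha}a^{\varrho\lambda})(\partial_{\lambda\beta}\theta)(x_{\varepsilon})u_{\varrho}$ of $Q^{\varepsilon}_{\alpha\beta}$. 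The remaining $O(1)$ pieces coming from ${}^{\varepsilon}b^{\varrho}_{\beta}(\partial_{\alpha}u_{\varrho}-{}^{\varepsilon}\Gamma^{\sigma}_{\varrho\alpha}u_{\sigma})$, its symmetric counterpart and $-c^{\varepsilon}_{\alpha\beta}u_{3}$ assemble the rest of (\ref{tensornew2021}); what survives at order $\varepsilon$ (now possibly containing first derivatives $\partial_{\alpha}u_{i}$ with uniformly bounded coefficients) defines $R^{\varepsilon}_{\alpha\beta}(\vect{u})$ and satisfies $|R^{\varepsilon}_{\alpha\beta}(\vect{u})|_{L^{2}(\Omega)}\le C|\vect{u}|_{H^{1}(\Omega)^{3}}$.

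The main obstacle is not conceptual but organizational: after differentiating the $\mathcal{O}(\varepsilon)$ remainder in ${}^{\varepsilon}b^{\varrho}_{\beta}$ (or, equivalently, the higher-order terms in the geometric expansions) one a priori risks a second $\varepsilon^{-1}$ singularity, and one must verify term by term that such singular contributions appear only through the explicit $\partial_{\alpha}{}^{\varepsilon}b^{\varrho}_{\beta}\,u_{\varrho}$ channel already identified. This is ensured because, with the sole exception of the $\varepsilon^{1}$ summand $\varepsilon(\partial_{\alpha}\theta)(x_{\varepsilon})\vect{a}_{3}$ appearing in (\ref{eq:alph2}), every non-trivial correction in (\ref{eq:12}), (\ref{eq:1687}), (\ref{eq:1887})--(\ref{eq:1987}) begins at order $\varepsilon^{2}$; a single differentiation therefore costs at most one power of $\varepsilon$ and cannot produce any further singular term beyond the one displayed.
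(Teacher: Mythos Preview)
Your approach is exactly the one the paper takes: its proof consists of a single sentence pointing to the expansions (\ref{eq:alph2}), (\ref{eq:12}), (\ref{eq:1887})--(\ref{eq:1987}) and the definitions (\ref{tensor1})--(\ref{tensor2}), (\ref{fundform1})--(\ref{fundaform3}), and you have fleshed out that substitution in considerably more detail, including the correct identification of $\partial_{\alpha}{}^{\varepsilon}b^{\varrho}_{\beta}\,u_{\varrho}$ as the sole source of the $\varepsilon^{-1}$ singularity.

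One small correction to your final paragraph: it is not true that every correction in (\ref{eq:12}) and (\ref{eq:1987}) begins at order $\varepsilon^{2}$ --- both $\vect{a}^{\varepsilon}_{3}$ and ${}^{\varepsilon}\vect{a}^{3}$ carry explicit $O(\varepsilon)$ terms. The reason no extra singularity arises is rather that the \emph{only} geometric coefficient in (\ref{tensor2}) that is differentiated in $x$ is ${}^{\varepsilon}b^{\varrho}_{\beta}$; all other coefficients (${}^{\varepsilon}\Gamma^{\varrho}_{\alpha\beta}$, $c^{\varepsilon}_{\alpha\beta}$, ${}^{\varepsilon}b^{\varrho}_{\alpha}$ undifferentiated) enter as multipliers of $\vect{u}$ or $\nabla\vect{u}$, so their $O(\varepsilon)$ corrections feed harmlessly into $R^{\varepsilon}_{\alpha\beta}$. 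With that adjustment your argument is complete and coincides with the paper's.
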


\vspace{0.5 cm}

\dem
The results (\ref{anouncedeq1})-(\ref{anouncedeq2}) follows from (\ref{tensor1})-(\ref{tensor2}), (\ref{fundform1})-(\ref{fundaform3}) and the expansions given in (\ref{eq:alph2}),(\ref{eq:12}), (\ref{eq:1887}) and (\ref{eq:1987}).

\lqqd

\vspace{1.0 cm}

\vspace{0.5 cm}

\vspace{1.0 cm}

\section{Moderately wrinkled shell}

\vspace{0.5 cm}

\subsection{A priori estimates}

\vspace{0.5 cm}

Let 
\begin{align*}
||\vect{v}||_{\varepsilon}&=\left[|\vect{v}|^2_{H^1(\Omega)^3}+\displaystyle\sum_{\alpha,\beta=1}^{2}\left|\partial_{\alpha\beta}v_3+\varepsilon^{-1}a^{\varrho\lambda}\partial_{\alpha\lambda\beta}\theta(x_{\varepsilon})v_{\varrho}\right|^2_{L^2(\Omega)}\right]^{1/2}.
\end{align*}

\vspace{1.0 cm}

\begin{teo}\label{teo1:aprioriestimate1}
There exists a constant $C>0$ such that 
\begin{equation}\label{bound1}
B^{\varepsilon}(\vect{v}, \vect{v}) \geq C||\vect{v}||^2_{\varepsilon},\quad \forall \vect{v}\in \widetilde{\vect{H}}=H^1_0(\Omega)^2\times [H^2_0(\Omega)\cap H^1_0(\Omega)].
\end{equation}
\end{teo}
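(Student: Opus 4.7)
The plan is to deduce the coercivity in three steps: a uniform ellipticity reduction for the Koiter tensor, substitution of the asymptotic expansions provided by Theorem \ref{teo:332}, and a Korn-type inequality to absorb the residual $|\vect{v}|_{H^1}$ error.

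\emph{Step 1 (Ellipticity).} Using $\lambda,\mu>0$ together with the expansion ${}^{\varepsilon}a^{\alpha\beta}=a^{\alpha\beta}+O(\varepsilon^2)$ in (\ref{eq:1687}) and the uniform positive-definiteness of $(a^{\alpha\beta})$ on the $C^3$ midsurface $\psi(\overline{\Omega})$, the fourth-order tensor $b_{\varepsilon}^{\alpha\beta\varrho\sigma}$ of (\ref{tensor}) is uniformly elliptic on symmetric $2\times 2$ tensors for $\varepsilon$ small. Together with $\sqrt{a_{\varepsilon}}\geq c_0>0$, this yields
$$B^{\varepsilon}(\vect{v},\vect{v})\;\geq\; C\int_{\Omega}\sum_{\alpha,\beta}\bigl[|\gamma^{\varepsilon}_{\alpha\beta}(\vect{v})|^2+|\Gamma^{\varepsilon}_{\alpha\beta}(\vect{v})|^2\bigr]\,dx$$
uniformly in $\varepsilon$ and $\vect{v}\in\widetilde{\vect{H}}$.

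\emph{Step 2 (Extracting the singular part).} Substituting (\ref{anouncedeq1})--(\ref{anouncedeq2}) and defining $S^{\varepsilon}_{\alpha\beta}(\vect{v}):=\partial_{\alpha\beta}v_3+\varepsilon^{-1}a^{\varrho\lambda}\partial_{\alpha\lambda\beta}\theta(x_{\varepsilon})v_{\varrho}$, one has $\Gamma^{\varepsilon}_{\alpha\beta}(\vect{v})=S^{\varepsilon}_{\alpha\beta}(\vect{v})+B^{\varepsilon}_{\alpha\beta}(\vect{v})$ with remainder $B^{\varepsilon}_{\alpha\beta}(\vect{v})=(\Gamma_{\alpha\beta}(\vect{v})-\partial_{\alpha\beta}v_3)+Q^{\varepsilon}_{\alpha\beta}(\vect{v})+\varepsilon R^{\varepsilon}_{\alpha\beta}(\vect{v})$. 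Thanks to (\ref{boundresidualstraintensor}) and to the explicit expression (\ref{tensornew2021}) of $Q^{\varepsilon}$, whose coefficients are $L^\infty$-bounded uniformly in $\varepsilon$ since $\theta\in C^3(\mathcal{Y})$, we get $\|B^{\varepsilon}_{\alpha\beta}(\vect{v})\|_{L^2}\leq C|\vect{v}|_{H^1}$. Applying $(a+b)^2\geq \tfrac{1}{2}a^2-b^2$ to $\Gamma^{\varepsilon}$, and to the analogous splitting $\gamma^{\varepsilon}_{\alpha\beta}(\vect{v})=(\gamma_{\alpha\beta}(\vect{v})-\partial_{\alpha\beta}\theta(x_{\varepsilon})v_3)+\varepsilon P^{\varepsilon}_{\alpha\beta}(\vect{v})$, gives after summation
$$B^{\varepsilon}(\vect{v},\vect{v})\;\geq\; C\sum_{\alpha,\beta}\|S^{\varepsilon}_{\alpha\beta}(\vect{v})\|_{L^2}^2+C\sum_{\alpha,\beta}\|\gamma_{\alpha\beta}(\vect{v})-\partial_{\alpha\beta}\theta(x_{\varepsilon})v_3\|_{L^2}^2-C_1|\vect{v}|_{H^1}^2.$$

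\emph{Step 3 (Korn absorption — the main obstacle).} It remains to absorb $-C_1|\vect{v}|^2_{H^1}$. I would invoke a uniform-in-$\varepsilon$ Korn inequality for clamped shells, $|\vect{v}|^2_{H^1(\Omega)^3}\leq C[\sum\|\gamma_{\alpha\beta}(\vect{v})\|^2+\sum\|\Gamma_{\alpha\beta}(\vect{v})\|^2]$. The membrane factor is dominated by $\|\gamma_{\alpha\beta}(\vect{v})-\partial_{\alpha\beta}\theta(x_{\varepsilon})v_3\|_{L^2}+C\|v_3\|_{L^2}$ (the latter absorbable by Poincaré), whereas $\|\Gamma_{\alpha\beta}(\vect{v})\|_{L^2}$ reduces to $\|\partial_{\alpha\beta}v_3\|_{L^2}$ modulo $|\vect{v}|_{H^1}$. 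The difficulty is that the naive identity $\partial_{\alpha\beta}v_3 = S^{\varepsilon}_{\alpha\beta}(\vect{v})-\varepsilon^{-1}a^{\varrho\lambda}\partial_{\alpha\lambda\beta}\theta(x_{\varepsilon})v_{\varrho}$ introduces an $\varepsilon^{-2}$ blow-up in the second term. I expect to close the argument by a two-scale compactness/contradiction: assuming no uniform $C$ exists, extract $\vect{v}^k\in\widetilde{\vect{H}}$ with $\|\vect{v}^k\|_{\varepsilon_k}=1$ and $B^{\varepsilon_k}(\vect{v}^k,\vect{v}^k)\to 0$, take a two-scale limit using the $Y$-periodicity of $\theta$, and contradict the clamped boundary conditions encoded in $\widetilde{\vect{H}}$. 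Choosing the absorption constants appropriately then yields the claim.
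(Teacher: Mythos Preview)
Your Steps~1 and~2 are sound but ultimately unnecessary for the route the paper takes; the paper goes directly to the contradiction argument you sketch at the end of Step~3, and that is where the real content lies. Your setup of the contradiction is correct: normalize $\|\vect{v}^n\|_{\varepsilon_n}=1$, use ellipticity of $b_{\varepsilon}^{\alpha\beta\varrho\sigma}$ to conclude $\gamma^{\varepsilon_n}_{\alpha\beta}(\vect{v}^n),\,\Gamma^{\varepsilon_n}_{\alpha\beta}(\vect{v}^n)\to 0$ in $L^2$. But from there your proposal is missing the decisive idea, and ``take a two-scale limit and contradict the clamped boundary conditions'' is not it.

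The key device in the paper is a pair of \emph{auxiliary substitutions} that undo the oscillations and reduce the wrinkled strains to the classical strains $\gamma_{\alpha\beta}$ of the base surface $\psi$. Concretely, one sets
\[
w^n_{\alpha}=v^n_{\alpha}-\varepsilon_n\,\partial_{\alpha}\theta(x/\varepsilon_n)\,v^n_3,\quad w^n_3=v^n_3,
\qquad
u^n_{\alpha}=\partial_{\alpha}v^n_3+\partial_{\alpha\varrho}\theta(x/\varepsilon_n)\,a^{\varrho\sigma}v^n_{\sigma},\quad u^n_3=v^n_3,
\]
and checks (using Theorem~\ref{teo:332}) that $\gamma^{\varepsilon_n}_{\alpha\beta}(\vect{v}^n)=\gamma_{\alpha\beta}(\vect{w}^n)+\mathcal{O}(\varepsilon_n)$ and $\Gamma^{\varepsilon_n}_{\alpha\beta}(\vect{v}^n)=\gamma_{\alpha\beta}(\vect{u}^n)+(\text{terms bounded by }|\vect{w}^n|_{H^1})+\mathcal{O}(\varepsilon_n)$. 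Now the \emph{classical} Korn inequality on the fixed surface $\psi$ (no $\varepsilon$) gives first $|w^n_{\alpha}|_{H^1},|w^n_3|_{L^2}\to 0$, and then $\gamma_{\alpha\beta}(\vect{u}^n)\to 0$. From these one extracts \emph{strong} convergences $v^n_{\alpha}\to 0$ in $H^1$, $\partial_{\alpha}v^n_3\to 0$ in $L^2$, and $S^{\varepsilon_n}_{\alpha\beta}(\vect{v}^n)\to 0$ in $L^2$, which together contradict $\|\vect{v}^n\|_{\varepsilon_n}=1$.

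Two remarks on why your Step~3 as written would not close. First, two-scale convergence yields only weak two-scale limits; to contradict a normalization of the norm you need strong convergence to zero of each piece of $\|\cdot\|_{\varepsilon_n}$, and that is precisely what the auxiliary functions deliver via the standard Korn inequality. Second, the contradiction is not with ``clamped boundary conditions'' per se; the boundary conditions enter only insofar as they make Korn's inequality available, while the contradiction is with the normalization. Your direct quantitative approach in Steps~1--2 runs into the $\varepsilon^{-2}$ blow-up exactly because it tries to separate $\partial_{\alpha\beta}v_3$ from $S^{\varepsilon}_{\alpha\beta}(\vect{v})$; the paper never attempts this separation and instead controls $S^{\varepsilon_n}_{\alpha\beta}(\vect{v}^n)$ as a whole by identifying it (up to lower order) with $\gamma_{\alpha\beta}(\vect{u}^n)$.
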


\vspace{0.5 cm}

\dem 
Assume that (\ref{bound1}) does not hold. Then there exists sequences $\left\{\varepsilon_n\right\}_{n\in\NN}$ in $\RR^{+}$ and $\left\{\vect{v}^n\right\}_{n\in \NN}$ in $\widetilde{\vect{H}}$. such that
\begin{align}
&\label{eq:8701}\varepsilon_n\in(0,1),\,\varepsilon_n\rightarrow 0\quad\text{and}\quad B^{\varepsilon_n}(\vect{v}_n, \vect{v}_n) \rightarrow 0\quad\text{as}\quad n\rightarrow \infty,\\
&\label{eq:8702}||\vect{v}^n||_{\varepsilon_n}=1,\quad \forall n\in\NN.
\end{align}
Using \cite[Theorem 2.1]{REF2}, we obtain from (\ref{eq:8701}) that
\begin{equation}\label{eq:8703}
\gamma_{\alpha\beta}^{\varepsilon_n}(\vect{v}^n), \Gamma_{\alpha \beta}^{\varepsilon_n}(\vect{v}^n)\rightarrow 0\quad\text{in}\quad L^2(\Omega)\quad\text{as}\quad n\rightarrow \infty,\, \alpha, \beta=1,2.
\end{equation}

For $n\in\NN$ and $\alpha=1,2$, we define the auxiliary functions 
\begin{align}
&\label{eq:8704}
\begin{cases}
w^n_{\alpha}=v^n_{\alpha}-\varepsilon_n\partial_{\alpha}\theta(x/\varepsilon_n)v^n_3, &\\
w^n_{3}=v^n_3,
\end{cases}\\
&\label{eq:8705}
\begin{cases}
u^n_{\alpha}=\partial_{\alpha}v^n_{3}+\partial_{\alpha\varrho}\theta(x/\varepsilon_n)a^{\varrho \sigma}v^n_{\sigma}, &\\
u^n_{3}=v^n_{3}
\end{cases}
\end{align}

Let $\vect{w}^n=(w^n_i)$, $\vect{u}^n=(u^n_i)$. From (\ref{tensorold1}) and (\ref{eq:8704}) we see that
\begin{equation}\label{eq:8706}
\gamma_{\alpha\beta}^{\varepsilon_n}(\vect{v}^n)=\gamma_{\alpha\beta}(\vect{w}^n)+\mathcal{O}(\varepsilon_n),
\end{equation}
and using Korn's inequatily and (\ref{eq:8703}) we obtain
\begin{equation}\label{eq:8707}
|w^n_{\alpha}|_{H^1(\Omega)}, |w^n_3|_{L^2(\Omega)}\rightarrow 0\quad\text{as}\quad n\to\infty, \alpha=1,2.
\end{equation}
Analogously,  from (\ref{tensorold2}) and (\ref{eq:8704})-(\ref{eq:8705}) we see that
\begin{align}
\Gamma_{\alpha \beta}^{\varepsilon_n}(\vect{v}^n)&\nonumber=\gamma_{\alpha\beta}(\vect{u}^n)+\displaystyle\frac{1}{2}a^{\varrho\lambda}[\partial_{\lambda\beta}\theta(x_{\varepsilon})\partial_{\alpha}w^n_{\varrho}+\partial_{\lambda\alpha}\theta(x_{\varepsilon})\partial_{\beta}w^n_{\varrho}]-\displaystyle\frac{1}{2}\partial_{\alpha\varrho}\theta(x_{\varepsilon})\partial_{\beta}a^{\varrho\sigma}w^n_{\sigma}\\
&\nonumber\quad+(\partial_{\alpha}b^{\varrho}_{\beta}+\Gamma^{\varrho}_{\alpha\sigma}b^{\sigma}_{\beta}-\Gamma^{\sigma}_{\alpha\beta}b^{\varrho}_{\sigma})w^n_{\varrho}+b^{\varrho}_{\beta}(\partial_{\alpha}w^n_{\varrho}-\Gamma^{\sigma}_{\varrho\alpha}w^n_{\sigma})+[b_{\alpha\beta}-c_{\alpha\beta}]w^n_3\\
&\nonumber\quad+b^{\varrho}_{\alpha}(\partial_{\beta}w^n_{\varrho}-\Gamma^{\sigma}_{\varrho\beta}w^n_{\sigma})-a^{\varrho\lambda}[\partial_{\lambda\sigma}\theta(x_{\varepsilon})\Gamma^{\sigma}_{\alpha\beta}w^n_{\varrho}+\partial_{\lambda\alpha}\theta(x_{\varepsilon})\Gamma^{\sigma}_{\varrho\beta}w^n_{\sigma}]\\
 &\label{eq:8708}\quad+\displaystyle\frac{\left(\partial_{\alpha2}\theta(x_{\varepsilon}) \vect{k}_{13}+\partial_{\alpha1}\theta(x_{\varepsilon}) \vect{k}_{32}\right)}{\sqrt{a}}a^{\varrho\lambda}\partial_{\lambda}a_{\beta}w^n_{\varrho}+\mathcal{O}(\varepsilon_n),\quad n\in\NN,\,\alpha,\beta=1,2,
\end{align}
and using  (\ref{eq:8703}) and (\ref{eq:8707}) we obtain
\begin{equation}\label{eq:8709}
\gamma_{\alpha\beta}(\vect{u}^n)\rightarrow 0\quad\text{in}\quad L^2(\Omega)\quad\text{as}\quad n\to\infty, \alpha=1,2.
\end{equation}
By (\ref{eq:8702}) the sequence $\{\vect{v}^n\}_{n\in \NN}$ is bounded in $H^1(\Omega)^3$, so there exists $\vect{v}^0$ in $H^1(\Omega)^3$ such that, up to a subsequence,
\begin{equation*}
\vect{v}^n=(v^n_i)\to\vect{v}^0=(v^0_i)\quad\text{weakly in}\,\, H^1(\Omega)^3\,\text{as}\, n\to\infty,
\end{equation*}
and therefore strongly in $L^2(\Omega)^3$. Using this fact we conclude that
\begin{align*}
&
\begin{cases}
w^n_{\alpha}=v^n_{\alpha}-\varepsilon_n\partial_{\alpha}\theta(x/\varepsilon_n)v^n_3\to v^0_{\alpha},\quad\text{in}\,\, L^2(\Omega)\,\text{as}\,\, n\to\infty, \alpha=1,2, &\\
w^n_{3}=v^n_3\to v^0_3, \quad\text{in}\,\, L^2(\Omega)\,\text{as}\,\, n\to\infty,
\end{cases}\\
&
\begin{cases}
u^n_{\alpha}=\partial_{\alpha}v^n_{3}+\partial_{\alpha\varrho}\theta(x/\varepsilon_n)a^{\varrho \sigma}v^n_{\sigma}\to \partial_{\alpha}v^0_{3},\quad\text{weakly in}\,\, L^2(\Omega)\,\text{as}\,\, n\to\infty,  \alpha=1,2,&\\
u^n_{3}=v^n_{3}\to v^0_3, \quad\text{in}\,\, L^2(\Omega)\,\text{as}\,\, n\to\infty.
\end{cases}
\end{align*}
From (\ref{eq:8707}) it follows now that $v^0_i=0$, $i=1,2,3$. The sequences
\begin{equation*}
\partial_{\beta}u^n_{\alpha}=\partial_{\alpha\beta}v^n_3+\frac{1}{\varepsilon_n}a^{\varrho\sigma}\partial_{\alpha\varrho\beta}\theta v^n_{\sigma}+a^{\varrho\sigma}\partial_{\alpha\varrho}\theta \partial_{\beta}v^n_{\sigma}+\partial_{\beta}a^{\varrho\sigma}\partial_{\alpha\varrho}\theta v^n_{\sigma},\quad \alpha,\beta=1,2,
\end{equation*}
are bounded  in $L^2(\Omega)$ because (\ref{eq:8702}), so there exists $\vect{u}^0=(u^0_{\alpha})$ in $H^1(\Omega)^2$ such that, up to a subsequence,
\begin{equation*}
u^n_{\alpha}\rightarrow u^0_{\alpha},\quad\text{weakly in}\,\, H^1(\Omega)\,\text{as}\,\, n\to\infty,  \alpha=1,2.
\end{equation*}
But then $u^0_{\alpha}=\partial_{\alpha}v_3^0$, and consequently $v^0_3$ belongs to $H^2(\Omega)\cap H^1_0(\Omega)$. Moreover, from (\ref{eq:8709}) it follows that $\partial_{\alpha\beta}v_3^0=0$, therefore $v_3^0=\vect{u}^0=0$.

By the definition (\ref{eq:8704}) of $w_{\alpha}^n$ we have
\begin{align*}
\partial_{\beta}w^n_{\alpha}&=\partial_{\beta}v^n_{\alpha}-\partial_{\alpha\beta}\theta v^n_3-\varepsilon_n\partial_{\alpha}\theta\partial_{\beta}v^n_{3},\quad \alpha,\beta=1,2,
\end{align*}
hence (\ref{eq:8707}) implies now
\begin{equation*}
\partial_{\beta}v^n_{\alpha}\rightarrow 0\quad \text{in}\,\, L^2(\Omega)\,\text{as}\,\, n\to\infty, \alpha=1,2,
\end{equation*}
and consequently
\begin{equation}\label{eq:87091}
v^n_{\alpha}\rightarrow 0\quad \text{in}\,\, H^1(\Omega)\,\text{as}\,\, n\to\infty, \alpha=1,2.
\end{equation}
From (\ref{eq:8705}) we find
\begin{align*}
\gamma_{\alpha\beta}(\vect{u}^n)&=\partial_{\alpha\beta}v^n_3+\displaystyle\frac{1}{\varepsilon_n}\partial_{\alpha\beta\varrho}\theta a^{\varrho\sigma}v^n_{\sigma}+\displaystyle\frac{1}{2}\left(\partial_{\alpha\varrho}\theta a^{\varrho\sigma}\partial_{\beta}v^n_{\sigma}+\partial_{\beta\varrho}\theta a^{\varrho\sigma}\partial_{\alpha}v^n_{\sigma}\right)\\
&\quad+\left(\partial_{\alpha\varrho}\theta\partial_{\beta}a^{\varrho\sigma}v^n_{\sigma}+\partial_{\beta\varrho}\theta\partial_{\alpha}a^{\varrho\sigma}v^n_{\sigma}\right)-\Gamma_{\alpha\beta}^{\varrho}\partial_{\varrho}v^n_3-\Gamma_{\alpha\beta}^{\varrho}\partial_{\varrho\lambda}\theta a^{\lambda\sigma}v^n_{\sigma}-b_{\alpha\beta}v^n_3,
\end{align*}
and using (\ref{eq:8709}) and (\ref{eq:87091}) we obtain for $\alpha\beta=1,2$ the following $L^2(\Omega)-$convergence as $n$ tends to infinity:
\begin{align}
\partial_{\alpha\beta}v^n_3+\displaystyle\frac{1}{\varepsilon_n}\partial_{\alpha\beta\varrho}\theta a^{\varrho\sigma}v^n_{\sigma}=&\nonumber\gamma_{\alpha\beta}(\vect{u}^n)-\displaystyle\frac{1}{2}\left(\partial_{\alpha\varrho}\theta a^{\varrho\sigma}\partial_{\beta}v^n_{\sigma}+\partial_{\beta\varrho}\theta a^{\varrho\sigma}\partial_{\alpha}v^n_{\sigma}\right)-\displaystyle\frac{1}{2}\left(\partial_{\alpha\varrho}\theta\partial_{\beta}a^{\varrho\sigma}v^n_{\sigma}+\partial_{\beta\varrho}\theta\partial_{\alpha}a^{\varrho\sigma}v^n_{\sigma}\right)\\
&\label{eq:87092}\quad+\Gamma_{\alpha\beta}^{\varrho}\partial_{\varrho}v^n_3+\Gamma_{\alpha\beta}^{\varrho}\partial_{\varrho\lambda}\theta a^{\lambda\sigma}v^n_{\sigma}-b_{\alpha\beta}v^n_3\to 0.
\end{align}
Finally, from (\ref{eq:87092}) it follows
\begin{equation}\label{eq:87093}
\partial_{\alpha}v^n_3=u^n_{\alpha}-\partial_{\alpha\varrho}\theta a^{\varrho\sigma}v^n_{\sigma}\to 0\quad \text{in}\,\, L^2(\Omega)\,\text{as}\,\, n\to\infty, \alpha=1,2.
\end{equation}

The limits found in (\ref{eq:87091})-(\ref{eq:87093}) are in contradiction with (\ref{eq:8702}).
\lqqd

\vspace{0.5 cm}

We are now able to establish the a priori estimate for the sequence $\{\vect{u}^{\varepsilon}\}_{\varepsilon>0}$ assuming that the right hand sides of (\ref{cartesian}) satisfy
\begin{equation}\label{boundforce}
|\vect{f}_{\varepsilon}|_{L^2(\Omega)^3}\leq C,\quad \forall \varepsilon>0.
\end{equation}

\vspace{0.5 cm}

\begin{teo}\label{aprioriestimate}
(a priori estimate). Assume that (\ref{boundforce}) holds true. Then there exists a constant $C>0$ such that for all $\varepsilon>0$,
\begin{equation}\label{boundnorm}
\begin{cases}
|\vect{u}^{\varepsilon}|_{H^1(\Omega)^3}\leq C, &\\
\left|\partial_{\alpha\beta}u^{\varepsilon}_3+\varepsilon^{-1}a^{\varrho\lambda}\partial_{\alpha\lambda\beta}\theta(x_{\varepsilon})u^{\varepsilon}_{\varrho}\right|_{L^2(\Omega)}\leq C, & \alpha,\beta=1,2.
\end{cases}
\end{equation}
\end{teo}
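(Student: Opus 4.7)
The plan is to combine the coercivity of Theorem~\ref{teo1:aprioriestimate1} with the variational identity \eqref{cartesian} tested against $\vect{v}=\vect{u}^{\varepsilon}$ itself. First I would observe that since $H^{2}_{0}(\Omega)\subset H^{1}_{0}(\Omega)$, the trial space $\vect{H}$ is in fact contained in $\widetilde{\vect{H}}$, so $\vect{u}^{\varepsilon}$ is an admissible element for both the variational equation and for the coercivity estimate. This yields
\begin{equation*}
C\,\|\vect{u}^{\varepsilon}\|_{\varepsilon}^{2}\leq B^{\varepsilon}(\vect{u}^{\varepsilon},\vect{u}^{\varepsilon})=L^{\varepsilon}(\vect{u}^{\varepsilon}),
\end{equation*}
reducing the problem to bounding $L^{\varepsilon}(\vect{u}^{\varepsilon})$ by an $\varepsilon$-independent multiple of $\|\vect{u}^{\varepsilon}\|_{\varepsilon}$.

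Next I would estimate $L^{\varepsilon}(\vect{u}^{\varepsilon})$ by the Cauchy--Schwarz inequality:
\begin{equation*}
|L^{\varepsilon}(\vect{u}^{\varepsilon})|\leq \bigl\|\sqrt{a_{\varepsilon}}\bigr\|_{L^{\infty}(\Omega)}\,|\vect{f}_{\varepsilon}|_{L^{2}(\Omega)^{3}}\,|\vect{u}^{\varepsilon}|_{L^{2}(\Omega)^{3}}.
\end{equation*}
The expansion \eqref{eq:8***}, together with the fact that $\theta\in C^{3}(\mathcal{Y})$ (hence $\theta$ and its first derivatives are uniformly bounded on $\RR^{2}$), gives $\|\sqrt{a_{\varepsilon}}\|_{L^{\infty}(\Omega)}\leq C$ uniformly in $\varepsilon$. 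The force hypothesis \eqref{boundforce} gives $|\vect{f}_{\varepsilon}|_{L^{2}}\leq C$. Finally, Poincar\'e's inequality on $\Omega$ (whose constant is independent of $\varepsilon$, since the reference domain $\Omega$ is fixed) combined with the inequality $|\vect{u}^{\varepsilon}|_{H^{1}(\Omega)^{3}}\leq \|\vect{u}^{\varepsilon}\|_{\varepsilon}$, built into the definition of $\|\cdot\|_{\varepsilon}$, yields
\begin{equation*}
|L^{\varepsilon}(\vect{u}^{\varepsilon})|\leq C\,\|\vect{u}^{\varepsilon}\|_{\varepsilon}.
\end{equation*}

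Combining the two inequalities (and disposing of the trivial case $\|\vect{u}^{\varepsilon}\|_{\varepsilon}=0$), I would conclude $\|\vect{u}^{\varepsilon}\|_{\varepsilon}\leq C$. Reading off the two summands of $\|\cdot\|_{\varepsilon}^{2}$ delivers precisely the two announced bounds in \eqref{boundnorm}: the first summand is the $H^{1}$-bound on $\vect{u}^{\varepsilon}$, and the second is the $L^{2}$-bound on $\partial_{\alpha\beta}u^{\varepsilon}_{3}+\varepsilon^{-1}a^{\varrho\lambda}\partial_{\alpha\lambda\beta}\theta(x_{\varepsilon})u^{\varepsilon}_{\varrho}$.

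The main obstacle is essentially already dispatched: controlling the singular $\varepsilon^{-1}$-term in a norm against which $B^{\varepsilon}$ is coercive uniformly in $\varepsilon$ is precisely the content of Theorem~\ref{teo1:aprioriestimate1}, proved by the delicate compactness/contradiction argument given there. On top of that result, the present step is a routine Lax--Milgram-type consequence, the only point requiring care being the uniform-in-$\varepsilon$ boundedness of the geometric coefficients $\sqrt{a_{\varepsilon}}$ and $a^{\alpha\beta}_{\varepsilon}$ entering $B^{\varepsilon}$ and $L^{\varepsilon}$, which follows from the asymptotic expansions of Section~\ref{sec:32}.
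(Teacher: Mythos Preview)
Your proposal is correct and follows exactly the route the paper indicates: the paper's own proof is the one-line remark that the estimate ``follows directly from the variational equation \eqref{cartesian}, Theorem~\ref{teo1:aprioriestimate1} and assumption \eqref{boundforce},'' and you have simply spelled out those three ingredients (coercivity, testing with $\vect{u}^{\varepsilon}$, bounding $L^{\varepsilon}$ via Cauchy--Schwarz and the uniform control of $\sqrt{a_{\varepsilon}}$). Note incidentally that $\vect{H}$ and $\widetilde{\vect{H}}$ actually coincide, since $H^{2}_{0}(\Omega)\cap H^{1}_{0}(\Omega)=H^{2}_{0}(\Omega)$.
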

\vspace{0.5 cm}

\begin{proof}
The proof follows directly from the variational equation (\ref{cartesian}), Theorem \ref{teo1:aprioriestimate1} and assumption (\ref{boundforce}).
\end{proof}

\vspace{1.0 cm}

\subsection{Two-scale limit}

\begin{definition}\label{def:twosc1}
\begin{itemize}
\item[(i)] 
A sequence $(f^{\varepsilon})\subset L^2(\Omega)$ is said to converge weakly two-scale on $\Omega$ to the function $f\in L^2(\Omega\times \mathcal{Y})$ as $\varepsilon\to 0$, provided that the sequence $(f^{\varepsilon})$ is bounded in $L^2(\Omega)$ and
\begin{equation}\label{eq:1****}
\displaystyle\lim_{\varepsilon\to 0}\int_{\Omega}f^{\varepsilon}(x)\varphi\left(x,x_{\varepsilon}\right)dx=\int_{\Omega}\int_{\mathcal{Y}}f(x, y)\varphi\left(x, y\right)dydx,
\end{equation}

for all $\varphi\in C^{0}_{c}\left(\Omega,C^0(\mathcal{Y}) \right)$.

\item[(ii)] We say that $f^{\varepsilon}$ stongly two-scale converges to $f$ if, in addition,
\begin{equation*}
\displaystyle\lim_{\varepsilon\to 0}||f^{\varepsilon}||_{L^2(\Omega)}=||f||_{L^2(\Omega\times \mathcal{Y})}.
\end{equation*}
\end{itemize}
\end{definition}

\medskip We write $f^{\varepsilon}\overset{2}{\rightharpoonup} f$ to denote weak two-scale convergence and $f^{\varepsilon}\overset{2}{\rightarrow} f$ to denote strong two-scale convergence. If $f^{\varepsilon}\overset{2}{\rightharpoonup} f$ then $f^{\varepsilon}\rightharpoonup \int_{\mathcal{Y}}f(\cdot, y)dy$ weakly in $L^2$. If $f^{\varepsilon}$ is bounded in $L^2(\Omega)$ then it has a subsequence which converges weakly three-scale to some $f\in L^2(\Omega; L^2(\mathcal{Y}))$. These and other facts can be deduced from the corresponding results on planar domains (cf. \cite{REF0,REF33}) by means of the following simple observations.

\vspace{0.5 cm}
The following theorem and lemma will be needed later.
\vspace{0.5 cm}

\begin{teo}\label{teo:3.4}
\begin{itemize}
\item[(i)] If $(u_{\varepsilon})_{{\varepsilon}>0}$ is a bounded sequence in $L^2(\Omega)$, then there exists a subsequence, still denoted by  $(u_{\varepsilon})_{{\varepsilon}>0}$, and a function $u^0$ in $L^2(\Omega\times\mathcal{Y})$ such that
\begin{equation*}
u_{\varepsilon}\overset{2}{\rightharpoonup} u^0\quad\text{weakly two-scale as}\,\, {\varepsilon}\to 0.
\end{equation*}
Moreover,
\begin{equation*}
u_{\varepsilon}\rightharpoonup \displaystyle\int_{\mathcal{Y}}u^0\,dy\quad\text{weakly in}\, L^2(\Omega)\,\text{as}\,\, {\varepsilon}\to 0.
\end{equation*}
\item[(ii)] If a sequence $(u_{\varepsilon})_{{\varepsilon}>0}$ converges strongly in $L^2(\Omega)$ to a limit $u^0$ in $L^2(\Omega)$, then
\begin{equation*}
u_{\varepsilon}\overset{2}{\rightharpoonup} u^0\,\,\text{weakly two-scale as}\,{\varepsilon}\to 0.
\end{equation*}
\item[(iii)] Let $(u_{\varepsilon})_{{\varepsilon}>0}$ be a bounded sequence in $H^1(\Omega)$ which converges weakly to a limit $u^0\in H^1(\Omega)$ in the sense of Definition \ref{def:twosc1}. Then $u_{\varepsilon}$ two-scale converges to $u^0(x)$, and there exist a unique function $u^1(x,y)\in L^2(\omega;\dot{H}^1(\mathcal{Y}))$ such that, up to a subsequence, $(\nabla u_{\varepsilon})_{{\varepsilon}>0}\overset{2}{\rightharpoonup}\nabla_{x}u^0+\nabla_{y}u^1(x,y)$.	
\end{itemize}	

\end{teo}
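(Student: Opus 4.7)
\textbf{Proof plan for Theorem \ref{teo:3.4}.}

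For part (i), the strategy is the classical Nguetseng argument by duality. I would define the linear functional
\[
T_\varepsilon:\varphi\mapsto \int_\Omega u_\varepsilon(x)\,\varphi(x,x/\varepsilon)\,dx
\]
on the separable Banach space $X=L^2(\Omega;C(\mathcal{Y}))$ (or on $C_c(\Omega;C(\mathcal{Y}))$ with the $L^2(\Omega\times\mathcal{Y})$ norm). The key estimate is $\lVert\varphi(\cdot,\cdot/\varepsilon)\rVert_{L^2(\Omega)}\to\lVert\varphi\rVert_{L^2(\Omega\times\mathcal{Y})}$, obtained by a density argument reducing to trigonometric polynomials in $y$, from which Cauchy--Schwarz yields $\lvert T_\varepsilon(\varphi)\rvert\le C\lVert\varphi\rVert_{L^2(\Omega\times\mathcal{Y})}$ uniformly in $\varepsilon$. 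A Banach--Alaoglu extraction in the dual of that separable space produces a subsequence and a limit functional, which by Riesz representation is integration against some $u^0\in L^2(\Omega\times\mathcal{Y})$. Choosing $\varphi=\varphi(x)$ independent of $y$ shows $u_\varepsilon\rightharpoonup\int_{\mathcal{Y}} u^0\,dy$ weakly in $L^2(\Omega)$.

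For part (ii), if $u_\varepsilon\to u^0$ strongly in $L^2(\Omega)$, I would split
\[
\int_\Omega u_\varepsilon(x)\,\varphi(x,x/\varepsilon)\,dx=\int_\Omega u^0(x)\,\varphi(x,x/\varepsilon)\,dx+\int_\Omega (u_\varepsilon-u^0)(x)\,\varphi(x,x/\varepsilon)\,dx.
\]
The second term tends to zero by Cauchy--Schwarz and the uniform bound on $\varphi(\cdot,\cdot/\varepsilon)$ in $L^2$. For the first, the classical fact that $\varphi(\cdot,\cdot/\varepsilon)\rightharpoonup\int_{\mathcal{Y}}\varphi(\cdot,y)\,dy$ weakly in $L^2(\Omega)$ (again reduced to trigonometric polynomials and then density) gives convergence to $\int_\Omega\int_{\mathcal{Y}} u^0(x)\varphi(x,y)\,dy\,dx$, as required.

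For part (iii), I would first apply (i) to the bounded sequence $(\nabla u_\varepsilon)\subset L^2(\Omega)^2$ to extract a two-scale limit $\xi\in L^2(\Omega\times\mathcal{Y})^2$, i.e.\ $\nabla u_\varepsilon\overset{2}{\rightharpoonup}\xi$. To identify $\xi$, pick a smooth $\mathcal{Y}$-periodic test field $\psi(x,y)\in C_c^\infty(\Omega;C^\infty(\mathcal{Y}))^2$ satisfying $\mathrm{div}_y\psi=0$, and integrate by parts in $x$ to get
\[
\int_\Omega \nabla u_\varepsilon\cdot \psi(x,x/\varepsilon)\,dx=-\int_\Omega u_\varepsilon\bigl(\mathrm{div}_x\psi\bigr)(x,x/\varepsilon)\,dx-\varepsilon^{-1}\int_\Omega u_\varepsilon\bigl(\mathrm{div}_y\psi\bigr)(x,x/\varepsilon)\,dx.
\]
The singular last term vanishes by the divergence-free condition on $\psi$. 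Using the Rellich--Kondrachov strong $L^2$-convergence $u_\varepsilon\to u^0$ combined with $\mathrm{div}_x\psi(\cdot,\cdot/\varepsilon)\rightharpoonup \int_{\mathcal{Y}}\mathrm{div}_x\psi\,dy$ in $L^2(\Omega)$, passing to the limit and then integrating by parts back in $x$ with $u^0\in H^1(\Omega)$, I obtain
\[
\int_\Omega\int_{\mathcal{Y}}(\xi-\nabla_x u^0)\cdot\psi\,dy\,dx=0
\]
for every such $\psi$. By the standard orthogonal decomposition of $L^2(\Omega;L^2(\mathcal{Y})^2)$ (solenoidal $y$-periodic fields versus $y$-gradients of functions in $L^2(\Omega;\dot H^1(\mathcal{Y}))$), this forces $\xi-\nabla_x u^0=\nabla_y u^1$ for a unique $u^1\in L^2(\Omega;\dot H^1(\mathcal{Y}))$.

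The main obstacle is the last step of (iii): establishing the orthogonal decomposition and extracting $u^1$ with zero $y$-average. This requires the Helmholtz-type characterization of $(\ker\mathrm{div}_y)^\perp=\mathrm{range}(\nabla_y)$ on the torus, uniqueness up to $y$-independent additive terms (fixed by normalizing to zero mean), and measurability in $x$. The other potential subtlety, already present in (i), is justifying the limit $\lVert\varphi(\cdot,\cdot/\varepsilon)\rVert_{L^2}\to\lVert\varphi\rVert_{L^2(\Omega\times\mathcal{Y})}$ for admissible test functions, which is handled by density from trigonometric polynomials.
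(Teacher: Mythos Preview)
Your argument is correct and is precisely the classical Nguetseng--Allaire proof. The paper does not give its own proof of this theorem: it states it as a known tool and refers to the foundational two-scale convergence literature (Allaire \cite{REF0}, Visintin \cite{REF33}), so your proposal simply spells out what is contained in those references and there is nothing to compare.
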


\vspace{0.5 cm}

\begin{lemma}\label{lemma:gamma1}
(cf. \cite[Lemma 3.11]{Velcic1} )Let $(u_{\varepsilon})_{{\varepsilon}>0}$ be a bounded sequence in $L^2(\Omega)$ which weakly two-scale converges to $u^0(x,y)\in L^2(\Omega\times\mathcal{Y})$. Let $(v_{\varepsilon})_{{\varepsilon}>0}$ be a sequence bounded in $L^{\infty}(\Omega)$ which converges in measure to $v_0\in L^{\infty}(\Omega)$. Then $v_{\varepsilon}u_{\varepsilon}\overset{2}{\rightharpoonup} v_0(x) u_0(x,y)$ weakly two-scale in $L^2(\Omega\times\mathcal{Y})$.
\end{lemma}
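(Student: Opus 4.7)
The plan is to identify the two-scale limit by splitting the test-function computation according to $v_\varepsilon = v_0 + (v_\varepsilon - v_0)$ and treating the two pieces independently. First I would observe that $\{v_\varepsilon u_\varepsilon\}$ is bounded in $L^2(\Omega)$ since $\|v_\varepsilon u_\varepsilon\|_{L^2}\leq \|v_\varepsilon\|_{L^\infty}\|u_\varepsilon\|_{L^2}$, so by Theorem \ref{teo:3.4}(i) it admits a weakly two-scale convergent subsequence. It therefore suffices to check that for every test function $\varphi\in C_c^0(\Omega,C^0(\mathcal{Y}))$,
\[
\int_\Omega v_\varepsilon u_\varepsilon\, \varphi(x,x/\varepsilon)\, dx \;\longrightarrow\; \int_\Omega\!\!\int_{\mathcal{Y}} v_0(x)\, u^0(x,y)\, \varphi(x,y)\, dy\, dx,
\]
since then every subsequence has a further subsequence with the same limit, uniquely identifying the two-scale limit as $v_0 u^0$.

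Next I would upgrade the convergence in measure of $v_\varepsilon$ to strong $L^2$ convergence. From any subsequence of $(v_\varepsilon)$ one extracts a further subsequence converging almost everywhere to $v_0$; since $|v_\varepsilon|\leq M$ uniformly, the dominated convergence theorem yields $L^p(\Omega)$-convergence for every $p<\infty$. By the usual subsequence-of-a-subsequence argument the whole sequence satisfies $v_\varepsilon\to v_0$ in $L^2(\Omega)$ (and $|v_0|\leq M$ almost everywhere). The Cauchy--Schwarz inequality then gives
\[
\left|\int_\Omega (v_\varepsilon-v_0)\, u_\varepsilon\, \varphi(x,x/\varepsilon)\, dx\right| \leq \|\varphi\|_\infty\,\|u_\varepsilon\|_{L^2}\,\|v_\varepsilon-v_0\|_{L^2}\;\longrightarrow\;0,
\]
so the $(v_\varepsilon-v_0)$ piece does not contribute in the limit.

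The remaining, and main, technical point is that $v_0\varphi$ need not be an admissible test function in the sense of Definition \ref{def:twosc1}, since $v_0$ is only in $L^\infty$ and not continuous. To overcome this I would argue by density: choose $v_0^n\in C_c(\Omega)$ with $v_0^n\to v_0$ in $L^2(\Omega)$ and $\|v_0^n\|_{L^\infty}\leq M$, using the pointwise truncation $T_M(s)=\max(-M,\min(s,M))$ (which is $1$-Lipschitz and fixes $v_0$) to enforce the uniform bound without destroying convergence. For each fixed $n$ the function $(x,y)\mapsto v_0^n(x)\varphi(x,y)$ belongs to $C_c^0(\Omega,C^0(\mathcal{Y}))$, so the weak two-scale convergence of $u_\varepsilon$ gives
\[
\int_\Omega v_0^n(x)\, u_\varepsilon\, \varphi(x,x/\varepsilon)\, dx \;\longrightarrow\; \int_\Omega\!\!\int_{\mathcal{Y}} v_0^n(x)\, u^0(x,y)\, \varphi(x,y)\, dy\, dx \quad (\varepsilon\to 0).
\]
On both sides the error from replacing $v_0^n$ by $v_0$ is controlled by $\|\varphi\|_\infty\|v_0^n-v_0\|_{L^2}$ times either $\sup_\varepsilon\|u_\varepsilon\|_{L^2}$ or $\|u^0\|_{L^2(\Omega\times\mathcal{Y})}$. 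A standard diagonal $\varepsilon$-then-$n$ procedure concludes. I expect the density/truncation step to be the only delicate part; the rest reduces to Hölder-type estimates and the definition of weak two-scale convergence.
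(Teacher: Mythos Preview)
Your argument is correct. The paper itself does not supply a proof of this lemma; it merely states it with a reference to \cite[Lemma 3.11]{Velcic1}, so there is no proof in the paper to compare against. Your splitting $v_\varepsilon = v_0 + (v_\varepsilon - v_0)$, the upgrade from convergence in measure plus an $L^\infty$ bound to strong $L^2$ convergence, and the density/truncation step to pass from $v_0\in L^\infty$ to continuous approximants are all standard and carried out cleanly; this is essentially the textbook proof of such product lemmas in the two-scale setting.
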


\vspace{0.5 cm}

The following theorem is essentially like as Theorem 1 in \cite{REF3} and the proof follows from () and \cite[Proposition 1.14]{REF0}.

\vspace{0.5 cm}

\begin{lemma}\label{lemmafundtwoscaleconv1}
There exists a subsequence of $\{\vect{u}^{\varepsilon}\}_{\varepsilon>0}$, still denoted by the same index $\varepsilon$, and functions
\begin{equation*}
\vect{u}^0=(u^0_i)\in H^1_0(\Omega)^3\quad\text{and}\quad\vect{u}^1=(u^1_i)\in L^2(\Omega;\dot{H}^1(\mathcal{Y}))^3
\end{equation*}
such that
\begin{equation*}
\begin{cases}
\vect{u}^{\varepsilon}\rightarrow \vect{u}^0\quad\text{weakly in}\quad H^1(\Omega)^3\quad\text{as}\quad \varepsilon\to 0, &\\
\nabla \vect{u}^{\varepsilon}\rightarrow \nabla \vect{u}^0+\nabla_y \vect{u}^1\quad\text{two-scale as}\quad  \varepsilon\to0. &\\
\end{cases}
\end{equation*}
\end{lemma}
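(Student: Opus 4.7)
The plan is to combine the uniform $H^1$-bound from Theorem \ref{aprioriestimate} with the standard two-scale compactness statement of Theorem \ref{teo:3.4}(iii), applied component by component. Since the statement only asks for weak $H^1$ convergence of $\vect{u}^{\varepsilon}$ together with a two-scale limit for its gradient, the finer information encoded in the second line of (\ref{boundnorm}) (the mixed second-derivative estimate involving $u^{\varepsilon}_3$ and $\varepsilon^{-1}a^{\varrho\lambda}\partial_{\alpha\lambda\beta}\theta(x_{\varepsilon})u^{\varepsilon}_{\varrho}$) is not needed here; it will presumably be exploited only in subsequent lemmas identifying the internal structure of $\vect{u}^1$.

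First, I would invoke the first line of (\ref{boundnorm}), which gives $|\vect{u}^{\varepsilon}|_{H^1(\Omega)^3}\leq C$. Since every admissible displacement lies in $\vect{H}\subset H^1_0(\Omega)^3$ and $H^1_0(\Omega)^3$ is weakly closed in $H^1(\Omega)^3$, the Banach--Alaoglu theorem together with the Rellich--Kondrachov compact embedding yields a subsequence (not relabelled) and a limit $\vect{u}^0=(u^0_i)\in H^1_0(\Omega)^3$ such that $\vect{u}^{\varepsilon}\rightharpoonup \vect{u}^0$ weakly in $H^1(\Omega)^3$ and $\vect{u}^{\varepsilon}\to \vect{u}^0$ strongly in $L^2(\Omega)^3$.

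Next, I would apply Theorem \ref{teo:3.4}(iii) to each scalar component $u^{\varepsilon}_i$, which along the subsequence just extracted is bounded in $H^1(\Omega)$ and converges weakly to $u^0_i\in H^1_0(\Omega)\subset H^1(\Omega)$. This produces, after a further extraction, a function $u^1_i\in L^2(\Omega;\dot{H}^1(\mathcal{Y}))$ such that $\nabla u^{\varepsilon}_i\overset{2}{\rightharpoonup}\nabla u^0_i+\nabla_y u^1_i$. Carrying out this extraction successively for $i=1,2,3$ and keeping the last subsequence yields one common index $\varepsilon$ along which all three gradients two-scale converge simultaneously. Setting $\vect{u}^1=(u^1_i)\in L^2(\Omega;\dot{H}^1(\mathcal{Y}))^3$ then gives $\nabla\vect{u}^{\varepsilon}\overset{2}{\rightharpoonup}\nabla\vect{u}^0+\nabla_y\vect{u}^1$, which is the stated conclusion.

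No step in this argument is genuinely delicate: the lemma is essentially a packaging of standard two-scale compactness results into the vector-valued form needed for what follows. The only points worth being explicit about are that the weak $H^1$-limit inherits the homogeneous Dirichlet boundary condition from the weak closedness of the trace-zero subspace, and that the three componentwise extractions can be concatenated into a single subsequence by a routine diagonal argument. I do not foresee a real obstacle here; the substantive analytic work is concentrated upstream in Theorem \ref{teo1:aprioriestimate1} and downstream in the identification of $\vect{u}^1$.
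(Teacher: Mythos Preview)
Your proposal is correct and matches the paper's own approach: the paper simply remarks that the lemma follows from the a priori estimate together with \cite[Proposition~1.14]{REF0} (Allaire's two-scale compactness for $H^1$-bounded sequences, restated here as Theorem~\ref{teo:3.4}(iii)), which is precisely the argument you spell out component by component. There is nothing to add.
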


\vspace{0.5 cm}

\begin{lemma}\label{lemma33}
Let $\{\vect{u}^{\varepsilon}\}_{\varepsilon>0}$ be a convergent subsequence as in Lemma \ref{lemmafundtwoscaleconv1}. Then $u^0_3$ belongs to $H^2_0(\Omega)$ and
\begin{equation*}
u^1_3(x,y)=-a^{\varrho\lambda}\partial_{\lambda}\theta(y)u^0_{\varrho}(x).
\end{equation*}
\end{lemma}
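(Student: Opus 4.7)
The plan is to extract both assertions from a single $H^1_0$-bounded auxiliary sequence, mirroring the construction of $u^n_\alpha$ in the proof of Theorem~\ref{teo1:aprioriestimate1}. For $\alpha=1,2$ set
\[
U^\varepsilon_\alpha(x):=\partial_\alpha u^\varepsilon_3(x)+a^{\varrho\sigma}(x)\,\partial_{\alpha\varrho}\theta(x_\varepsilon)\,u^\varepsilon_\sigma(x).
\]
Differentiating and relabeling the dummy indices $\varrho,\sigma$ in the leading pair gives
\[
\partial_\beta U^\varepsilon_\alpha=\bigl[\partial_{\alpha\beta}u^\varepsilon_3+\varepsilon^{-1}a^{\varrho\lambda}\partial_{\alpha\lambda\beta}\theta(x_\varepsilon)u^\varepsilon_\varrho\bigr]+\partial_\beta a^{\varrho\sigma}\partial_{\alpha\varrho}\theta(x_\varepsilon)u^\varepsilon_\sigma+a^{\varrho\sigma}\partial_{\alpha\varrho}\theta(x_\varepsilon)\partial_\beta u^\varepsilon_\sigma.
\]
The bracketed quantity is $L^2$-bounded by Theorem~\ref{aprioriestimate}; the remaining summands are $L^\infty$ coefficients (since $a,\theta\in C^3$) times $L^2$-bounded sequences. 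Hence $U^\varepsilon_\alpha$ is bounded in $H^1(\Omega)$, and $U^\varepsilon_\alpha\in H^1_0(\Omega)$ because $u^\varepsilon_3\in H^2_0(\Omega)$ forces $\partial_\alpha u^\varepsilon_3$ to have vanishing trace while $u^\varepsilon_\sigma\in H^1_0(\Omega)$ annihilates the second summand on $\partial\Omega$. Passing to a subsequence, $U^\varepsilon_\alpha\rightharpoonup U^0_\alpha$ weakly in $H^1_0(\Omega)$.

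I identify $U^0_\alpha$ at the $L^2$-weak level: $\partial_\alpha u^\varepsilon_3\rightharpoonup\partial_\alpha u^0_3$ by Lemma~\ref{lemmafundtwoscaleconv1}, while $u^\varepsilon_\sigma\to u^0_\sigma$ strongly in $L^2$ by Rellich and $\partial_{\alpha\varrho}\theta(x_\varepsilon)\rightharpoonup 0$ weakly in $L^2$ since a second derivative of a $Y$-periodic function has zero $y$-average. The strong--weak product therefore tends to zero weakly in $L^2$, so $U^0_\alpha=\partial_\alpha u^0_3$. Thus $\partial_\alpha u^0_3\in H^1_0(\Omega)$ for $\alpha=1,2$; combined with $u^0_3\in H^1_0(\Omega)$ this yields $u^0_3\in H^2_0(\Omega)$.

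For the corrector formula I use again that $Z^\varepsilon_{\alpha\beta}:=\partial_{\alpha\beta}u^\varepsilon_3+\varepsilon^{-1}a^{\varrho\lambda}\partial_{\alpha\lambda\beta}\theta(x_\varepsilon)u^\varepsilon_\varrho$ is bounded in $L^2(\Omega)$, so that $\varepsilon Z^\varepsilon_{\alpha\beta}\to 0$ strongly and
\[
\varepsilon\,\partial_{\alpha\beta}u^\varepsilon_3=-a^{\varrho\lambda}(x)\partial_{\alpha\lambda\beta}\theta(x_\varepsilon)u^\varepsilon_\varrho+o(1)\qquad\text{in }L^2(\Omega).
\]
Fix $\varphi\in C^\infty_c(\Omega;C^\infty(\mathcal{Y}))$ and evaluate $I^\varepsilon:=\int_\Omega\varepsilon\,\partial_{\alpha\beta}u^\varepsilon_3\,\varphi(x,x_\varepsilon)\,dx$ in two ways. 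Integration by parts in $x_\beta$, the chain rule identity $\partial_{x_\beta}[\varphi(x,x_\varepsilon)]=\partial_{x_\beta}\varphi+\varepsilon^{-1}\partial_{y_\beta}\varphi$ at $(x,x_\varepsilon)$, and the two-scale convergence $\partial_\alpha u^\varepsilon_3\overset{2}{\rightharpoonup}\partial_\alpha u^0_3+\partial_{y_\alpha}u^1_3$ (with the $\partial_\alpha u^0_3$ part killed by $\int_{\mathcal{Y}}\partial_{y_\beta}\varphi\,dy=0$) yield
\[
I^\varepsilon\longrightarrow-\int_\Omega\!\!\int_{\mathcal{Y}}\partial_{y_\alpha}u^1_3\,\partial_{y_\beta}\varphi\,dy\,dx.
\]
On the other hand, the displayed identity, the strong $L^2$-convergence of $u^\varepsilon_\varrho$, the weak $L^2$-convergence of $a^{\varrho\lambda}(x)\partial_{\alpha\lambda\beta}\theta(x_\varepsilon)\varphi(x,x_\varepsilon)$ to its $y$-average, and one integration by parts in $y_\beta$, give
\[
I^\varepsilon\longrightarrow\int_\Omega\!\!\int_{\mathcal{Y}}a^{\varrho\lambda}(x)u^0_\varrho(x)\,\partial_{y_\alpha y_\lambda}\theta(y)\,\partial_{y_\beta}\varphi(x,y)\,dy\,dx.
\]

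Setting $w(x,y):=u^1_3(x,y)+a^{\varrho\lambda}(x)u^0_\varrho(x)\partial_{y_\lambda}\theta(y)$, the equality of the two limits reads $\int_\Omega\int_{\mathcal{Y}}\partial_{y_\alpha}w\,\partial_{y_\beta}\varphi\,dy\,dx=0$ for all $\alpha,\beta\in\{1,2\}$ and all admissible $\varphi$. Summing over $\alpha=\beta$, localizing in $x$, and using density to take $\varphi(x,\cdot)=w(x,\cdot)\in\dot{H}^1(\mathcal{Y})$ give $\nabla_y w(x,\cdot)=0$ a.e., so $w$ is $y$-independent. Since $u^1_3(x,\cdot)$ has mean zero on $\mathcal{Y}$ and $\int_{\mathcal{Y}}\partial_{y_\lambda}\theta\,dy=0$, $w$ has $y$-mean zero as well, forcing $w\equiv 0$ and the announced formula. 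The main obstacle is the careful handling of the strong--weak product limits for $a^{\varrho\lambda}\partial_{\alpha\lambda\beta}\theta(x_\varepsilon)u^\varepsilon_\varrho$ and the integration by parts that turns the third $y$-derivative of $\theta$ into the first derivative appearing in $u^1_3$; the rest is bookkeeping.
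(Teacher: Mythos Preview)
Your argument is correct. The first half --- constructing $U^\varepsilon_\alpha=\partial_\alpha u^\varepsilon_3+a^{\varrho\sigma}\partial_{\alpha\varrho}\theta(x_\varepsilon)u^\varepsilon_\sigma$, showing it is bounded in $H^1_0(\Omega)$ via Theorem~\ref{aprioriestimate}, and identifying its weak limit as $\partial_\alpha u^0_3$ --- is exactly the paper's approach (the paper calls this sequence $w^\varepsilon_\alpha$).

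For the corrector formula you take a genuinely different and more laborious route. Having already shown that $U^\varepsilon_\alpha$ is bounded in $H^1(\Omega)$ and converges weakly there to $\partial_\alpha u^0_3$, the paper simply observes that its two-scale limit must then be $y$-independent and equal to $\partial_\alpha u^0_3$; comparing this with the termwise two-scale limit $\partial_\alpha u^0_3+\partial^y_\alpha u^1_3+a^{\varrho\lambda}\partial_{\alpha\lambda}\theta(y)u^0_\varrho$ (from Lemma~\ref{lemmafundtwoscaleconv1} and the strong $L^2$-convergence of $u^\varepsilon_\varrho$) gives $\partial^y_\alpha u^1_3=-a^{\varrho\lambda}\partial_{\alpha\lambda}\theta(y)u^0_\varrho$ in one line. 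You instead climb one derivative higher, multiply by $\varepsilon$, integrate by parts against oscillating test functions, and close with a variational argument on $\nabla_y w$. This works, but it discards the very $H^1$-bound on $U^\varepsilon_\alpha$ you just established, which already encodes the $y$-independence you then recover by hand. The payoff of your approach is that it illustrates the ``$\varepsilon\cdot(\text{bounded})\to 0$'' mechanism explicitly; the payoff of the paper's is brevity and the direct use of Theorem~\ref{teo:3.4}(iii).
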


\vspace{0.5 cm}

\begin{proof}
\begin{equation}\label{eq:87094}
w^{\varepsilon}_{\alpha}(x)=\partial_{\alpha}u^{\varepsilon}_3(x)+a^{\varrho\lambda}\partial_{\alpha\lambda}\theta(x/\varepsilon)u^{\varepsilon}_{\varrho}(x),\quad \alpha=1,2.
\end{equation}
Then, 
\begin{equation*}
\partial_{\beta}w^{\varepsilon}_{\alpha}(x)=\partial_{\alpha\beta}u^{\varepsilon}_3(x)+\varepsilon_n^{-1}a^{\varrho\lambda}\partial_{\alpha\beta\lambda}\theta(x/\varepsilon)u^{\varepsilon}_{\varrho}(x)+a^{\varrho\lambda}\partial_{\alpha\lambda}\theta(x/\varepsilon)\partial_{\beta}u^{\varepsilon}_{\varrho}(x),\quad \alpha,\beta =1,2.
\end{equation*}
Obviously, $(w^{\varepsilon}_{\alpha})$ belongs to $H^1_0(\Omega)^2$. From (\ref{boundnorm}) we obtain
\begin{equation}\label{eq:87095}
|w^{\varepsilon}_{\alpha}|_{H^1(\Omega)}\leq C,\quad \varepsilon>0,\quad \alpha=1,2,
\end{equation}
for some constant $C>0$. Therefore, passing to a convergent subsequence denoted by the same index $\varepsilon$, we conclude that there exists functions
\begin{equation*}
w^0_{\alpha}\in H^1_0(\Omega)\quad\text{and}\quad w^1_{\alpha}\in L^2(\Omega; H^1(\dot{\mathcal{Y}})),\quad \alpha=1,2,
\end{equation*}
such that for $\alpha=1,2$,
\begin{equation}\label{eq:87096}
\begin{cases}
w_{\alpha}^{\varepsilon}\rightarrow w_{\alpha}^0\quad\text{weakly in}\quad H^1(\Omega)\quad\text{as}\quad \varepsilon\to 0, &\\
\nabla w_{\alpha}^{\varepsilon}\rightarrow \nabla w_{\alpha}^0+\nabla_y  w_{\alpha}^1\quad\text{two-scale as}\quad  \varepsilon\to 0. &\\
\end{cases}
\end{equation}
Now (\ref{eq:87096}), definition (\ref{eq:87094}) and the standard Lemma on periodic functions [, Theorem 1.8] imply that
\begin{equation*}
w_{\alpha}^{\varepsilon}\rightarrow \partial_{\alpha}u^0_3\quad\text{weakly in}\quad L^2(\Omega)\quad\text{as}\quad \varepsilon\to 0,\quad \alpha=1,2,
\end{equation*}
and the regularity of $u^0_3$ is proved.
\vspace{0.5 cm}
Using (\ref{eq:87094}) again we conclude that for $\alpha=1,2$,
\begin{equation*}
w_{\alpha}^{\varepsilon}=\partial_{\alpha}u^{\varepsilon}_3+a^{\varrho\lambda}\partial_{\alpha\lambda}\theta(x/\varepsilon)u^{\varepsilon}_{\varrho}\rightarrow \partial_{\alpha}u^0_3+\partial^y_{\alpha}u^1_3+a^{\varrho\lambda}\partial_{\alpha\lambda}\theta(y)u^0_{\varrho}\quad\text{two-scale},
\end{equation*}
so that $\partial^y_{\alpha}u^1_3=-a^{\varrho\lambda}\partial_{\alpha\lambda}\theta(y)u^0_{\varrho}$.
\end{proof}

\vspace{0.5 cm}

\begin{lemma}\label{lemmaconv3}
Let $\{\vect{u}^{\varepsilon}\}_{\varepsilon>0}$ be a convergent subsequence as in Lemma \ref{lemmafundtwoscaleconv1}. Then there exists a function $W$ in $L^2(\Omega; H^2(\mathcal{Y}))$ such that for $\alpha=1,2$ and as $\varepsilon$ tends to zero,
\begin{align*}
\partial_{\alpha\beta}u^{\varepsilon}_{3}+\varepsilon^{-1}a^{\varrho\lambda}\partial_{\alpha\beta\lambda}\theta(x/\varepsilon)u^{\varepsilon}_{\varrho}&\rightarrow\partial_{\alpha\beta}u^0_3(x)+\partial^y_{\alpha\beta}W(x,y)+a^{\varrho\lambda}\partial_{\alpha\beta\lambda}\theta(y)u^1_{\varrho}(x,y)\\
&-a^{\varrho\lambda}\partial_{\beta\lambda}\theta(y)\partial_{\alpha}u^0_{\varrho}(x)-a^{\varrho\lambda}\partial_{\alpha\lambda}\theta(y)\partial_{\beta}u^0_{\varrho}(x)-\partial_{\beta}a^{\varrho\lambda}\partial_{\alpha\lambda}\theta(y)u^0_{\varrho}(x),\\
&\text{two-scale}.
\end{align*}
\end{lemma}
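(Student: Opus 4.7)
The plan is to leverage the auxiliary function $w^{\varepsilon}_\alpha$ from Lemma \ref{lemma33}, whose gradient has already been two-scale analyzed. By Theorem \ref{aprioriestimate}, the combination
$$A^{\varepsilon}_{\alpha\beta} := \partial_{\alpha\beta}u^{\varepsilon}_3 + \varepsilon^{-1}a^{\varrho\lambda}\partial_{\alpha\beta\lambda}\theta(x/\varepsilon)u^{\varepsilon}_\varrho$$
is uniformly bounded in $L^2(\Omega)$, so by Theorem \ref{teo:3.4}(i) a subsequence converges weakly two-scale; only the limit has to be identified.

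The key step is to differentiate the defining identity $w^{\varepsilon}_\alpha = \partial_\alpha u^{\varepsilon}_3 + a^{\varrho\lambda}\partial_{\alpha\lambda}\theta(x/\varepsilon)u^{\varepsilon}_\varrho$ in $x_\beta$, which reproduces $A^{\varepsilon}_{\alpha\beta}$ up to lower-order products of bounded periodic multipliers with first derivatives of $\vect{u}^{\varepsilon}$:
$$\partial_\beta w^{\varepsilon}_\alpha = A^{\varepsilon}_{\alpha\beta} + a^{\varrho\lambda}\partial_{\alpha\lambda}\theta(x/\varepsilon)\partial_\beta u^{\varepsilon}_\varrho + \partial_\beta a^{\varrho\lambda}\,\partial_{\alpha\lambda}\theta(x/\varepsilon)u^{\varepsilon}_\varrho.$$
Solving for $A^{\varepsilon}_{\alpha\beta}$, I pass to the two-scale limit term by term: Lemma \ref{lemma33} yields $\partial_\beta w^{\varepsilon}_\alpha \overset{2}{\rightharpoonup} \partial_{\alpha\beta}u^0_3 + \partial^y_\beta w^1_\alpha$; Lemma \ref{lemmafundtwoscaleconv1} yields $\partial_\beta u^{\varepsilon}_\varrho \overset{2}{\rightharpoonup} \partial_\beta u^0_\varrho + \partial^y_\beta u^1_\varrho$ together with $u^{\varepsilon}_\varrho \to u^0_\varrho$ strongly in $L^2(\Omega)$; and Lemma \ref{lemma:gamma1} allows multiplying these against the bounded periodic factors $a^{\varrho\lambda}\partial_{\alpha\lambda}\theta(x/\varepsilon)$ and $\partial_\beta a^{\varrho\lambda}\partial_{\alpha\lambda}\theta(x/\varepsilon)$. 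Isolating the purely $y$-dependent fluctuating pieces, the limit matches the stated expression provided one sets
$$\partial^y_{\alpha\beta}W(x,y) = \partial^y_\beta w^1_\alpha - a^{\varrho\lambda}\partial_{\alpha\lambda}\theta(y)\partial^y_\beta u^1_\varrho - a^{\varrho\lambda}\partial_{\alpha\beta\lambda}\theta(y)u^1_\varrho(x,y) + a^{\varrho\lambda}\partial_{\beta\lambda}\theta(y)\partial_\alpha u^0_\varrho(x).$$

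The main obstacle is this final identification: one must exhibit a scalar $W$ with $W(x,\cdot) \in H^2(\mathcal{Y})$ whose $y$-Hessian realizes the matrix $M_{\alpha\beta}(x,y)$ on the right. This splits into two subtasks. First, verifying the symmetric compatibility $\partial^y_\gamma M_{\alpha\beta} = \partial^y_\alpha M_{\gamma\beta}$ in $\mathcal{D}'(\mathcal{Y})$, which is forced by the symmetry of $A^{\varepsilon}_{\alpha\beta}$ in $\alpha,\beta$ (it descends from the single scalar $u^{\varepsilon}_3$) combined with the relation $\partial^y_\alpha u^1_3 = -a^{\varrho\lambda}\partial_{\alpha\lambda}\theta\,u^0_\varrho$ supplied by Lemma \ref{lemma33}. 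Second, once compatibility is in place, a second-order Poincar\'e lemma on the torus, equivalent to elliptic regularity for $-\Delta_y$ on $\mathcal{Y}$ applied to the trace $M_{\gamma\gamma}$, produces a unique mean-zero $W(x,\cdot)\in H^2(\mathcal{Y})$; measurability in $x$ is automatic since every ingredient lies in the appropriate Bochner space over $\Omega$. As a sanity check, the same $W$ should be recoverable as the $\varepsilon^2$-coefficient in a formal two-scale expansion $u^{\varepsilon}_3 \sim u^0_3(x) + \varepsilon u^1_3(x,x/\varepsilon) + \varepsilon^2 W(x,x/\varepsilon)$, which is a convenient way to guess its existence before running the rigorous argument.
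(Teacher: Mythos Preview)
Your overall plan coincides with the paper's: both express $A^{\varepsilon}_{\alpha\beta}$ as $\partial_\beta w^{\varepsilon}_\alpha$ minus the lower-order products $a^{\varrho\lambda}\partial_{\alpha\lambda}\theta(x_\varepsilon)\partial_\beta u^{\varepsilon}_\varrho$ and $\partial_\beta a^{\varrho\lambda}\partial_{\alpha\lambda}\theta(x_\varepsilon)u^{\varepsilon}_\varrho$, and then pass to two-scale limits term by term using the information already obtained on $\nabla w^{\varepsilon}_\alpha$ and $\nabla u^{\varepsilon}_\varrho$. The only substantive difference is in how $W$ is produced. The paper notices that your matrix $M_{\alpha\beta}$ is exactly $\partial^y_\beta z_\alpha$ for the explicit vector field
\[
z_\alpha \;=\; w^1_\alpha - a^{\varrho\lambda}\partial_{\alpha\lambda}\theta\,u^1_\varrho + a^{\varrho\lambda}\partial_\lambda\theta\,\partial_\alpha u^0_\varrho \;+\; \text{($y$-constant)},
\]
so it suffices to check $\mathrm{curl}_y z=0$; this is done by passing to the two-scale limit in the antisymmetric combination $\partial_\beta w^{\varepsilon}_\alpha-\partial_\alpha w^{\varepsilon}_\beta$ and then applying the ordinary first-order Poincar\'e lemma on $\mathcal{Y}$, which gives $z=\nabla_y W$ with $W\in L^2(\Omega;H^2(\mathcal{Y}))$ immediately.

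Your route---checking a Saint-Venant--type compatibility on $M_{\alpha\beta}$ and then solving $-\Delta_y W = -M_{\gamma\gamma}$---can be made to work, but the justification you give for the compatibility is not sufficient. Symmetry of $A^{\varepsilon}_{\alpha\beta}$ yields only symmetry of its two-scale limit, and because the remaining pieces of that limit (notably $-\partial_\beta a^{\varrho\lambda}\partial_{\alpha\lambda}\theta\,u^0_\varrho$) are not themselves symmetric in $\alpha,\beta$, you do not get $M_{\alpha\beta}=M_{\beta\alpha}$ for free, let alone the third-order identity $\partial^y_\gamma M_{\alpha\beta}=\partial^y_\alpha M_{\gamma\beta}$. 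The cleanest repair is exactly the paper's: recognise that your $M_{\alpha\beta}$ is already a $y$-gradient $\partial^y_\beta z_\alpha$, and verify $\mathrm{curl}_y z=0$ via the antisymmetric limit computation. Once that is in hand, the ``second-order Poincar\'e'' and the elliptic step are unnecessary.
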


\vspace{0.5 cm}

\begin{proof}
Let the auxiliary function $(w^{\varepsilon}_{\alpha})$ be as in (\ref{eq:87094}). As in the proof of Lemma \ref{lemma33} we conclude that there exist functions
\begin{equation*}
w_{\alpha}^0\in H^1_0(\Omega)\quad\text{and}\quad w^1_{\alpha}\in L^2(\Omega; H^1(\dot{\mathcal{Y}})),\quad \alpha=1,2,
\end{equation*} 
such that, up to a subsequence, the converge stated in (\ref{eq:87096}) holds true for $\alpha=1,2$. Moreover,
\begin{equation}\label{eq:87097}
w^0_{\alpha}=\partial_{\alpha}u^0_{3},\quad \alpha=1,2.
\end{equation}
From (\ref{eq:87094}) we also find
\begin{equation}\label{eq:87098}
\partial_{\beta}w^{\varepsilon}_{\alpha}(x)-\partial_{\alpha}w^{\varepsilon}_{\beta}(x)=a^{\varrho\lambda}\partial_{\alpha\lambda}\theta(x/\varepsilon)\partial_{\beta}u^{\varepsilon}_{\varrho}(x)-a^{\varrho\lambda}\partial_{\beta\lambda}\theta(x/\varepsilon)\partial_{\alpha}u^{\varepsilon}_{\varrho}(x),\quad \alpha,\beta=1,2,
\end{equation}
thus, for $\alpha,\beta=1,2$
\begin{equation*}
\partial_{\beta}w^{\varepsilon}_{\alpha}-\partial_{\alpha}w^{\varepsilon}_{\beta}\rightarrow \partial^y_{\beta}w^1_{\alpha}-\partial^y_{\alpha}w^1_{\beta}\quad\text{two-scale as}\,\varepsilon\to 0.
\end{equation*}
On the other hand, by (\ref{boundnorm}) and [2, Theorem 1.8] it holds for $\alpha,\beta=1,2,$
\begin{align*}
a^{\varrho\lambda}\left[\partial_{\alpha\lambda}\theta(x/\varepsilon)\partial_{\beta}u^{\varepsilon}_{\varrho}-\partial_{\beta\lambda}\theta(x/\varepsilon)\partial_{\alpha}u^{\varepsilon}_{\varrho}\right]&\rightarrow a^{\varrho\lambda}\left[\partial_{\alpha\lambda}\theta(y)\partial_{\beta}u^0_{\varrho}-\partial_{\beta\lambda}\theta(y)\partial_{\alpha}u^0_{\varrho}+\partial_{\alpha\lambda}\theta(y)\partial^{y}_{\beta}u^1_{\varrho}\right.\\
&\quad-\left.\partial_{\beta\lambda}\theta(y)\partial^y_{\alpha}u^1_{\varrho}\right]\quad\text{two-scale as}\, \varepsilon\to 0,
\end{align*}
therefore by (\ref{eq:87098}),
\begin{align*}
\partial^y_{\beta}w^1_{\alpha}-\partial^y_{\alpha}w^1_{\beta}&=a^{\varrho\lambda}\left[\partial_{\alpha\lambda}\theta \partial_{\beta}u^0_{\varrho}-\partial_{\beta\lambda}\theta\partial_{\alpha}u^0_{\varrho}+\partial_{\alpha\lambda}\theta(y)\partial^{y}_{\beta}u^1_{\varrho}-\partial_{\beta\lambda}\theta\partial^y_{\alpha}u^1_{\varrho}\right]\\
&=\partial^y_{\beta}\left\{a^{\varrho\lambda}\left[\partial_{\alpha\lambda}\theta u^1_{\varrho}-\partial_{\lambda}\theta \partial_{\alpha}u^0_{\varrho}\right]\right\}-\partial^y_{\alpha}\left\{a^{\varrho\lambda}\left[\partial_{\beta\lambda}\theta u^1_{\varrho}-\partial_{\lambda}\theta \partial_{\beta}u^0_{\varrho}\right]\right\},\,\alpha,\beta=1,2.
\end{align*}
We define the auxiliary functions
\begin{align*}
z_{\alpha}=w^1_{\alpha}-a^{\varrho\lambda}\partial_{\alpha\lambda}\theta u^1_{\varrho}+a^{\varrho\lambda}\partial_{y}\theta \partial_{\alpha}u^0_{\varrho}+\left<a^{\varrho\lambda}\partial_{\alpha\lambda}\theta u^1_{\varrho}\right>,\quad \alpha=1,2,
\end{align*}
where $\left<\cdot\right>$ denotes the mean value of a function over the unit cell $\mathcal{Y}$, i.e., $\left<\phi\right>=\displaystyle\int_{\mathcal{Y}}\phi$. Obviously, 
\begin{equation*}
\vect{z}=(z_{\alpha})\in L^2(\Omega; H^1(\dot{\mathcal{Y}}))^2\quad\text{and}\quad \text{curl}_{y}\vect{z}=0,
\end{equation*}
thus by [1, Lemma 3.3] there exist a function $W$ such that
\begin{equation*}
W\in L^2(\Omega; H^2(\dot{\mathcal{Y}}))\quad \text{and}\quad \vect{z}=\nabla_y W.
\end{equation*}
Using this, we find
\begin{equation*}
\partial^y_{\beta}w^1_{\alpha}=a^{\varrho\lambda}\partial_{\alpha\beta\lambda}\theta u^1_{\varrho}+a^{\varrho\lambda}\partial_{\alpha\lambda}\theta\partial^y_{\beta}u^1_{\varrho}-a^{\varrho\lambda}\partial_{\beta\lambda}\theta \partial_{\alpha}u^0_{\varrho}+\partial^y_{\alpha\beta}W.
\end{equation*}
Recalling the definition (\ref{eq:87094}) of $w^{\varepsilon}_{\alpha}$ and (\ref{eq:87096}) we obtain for $\alpha,\beta=1,2,$ the following two-scale convergences:
\begin{align*}
&\partial_{\alpha\beta}u^{\varepsilon}_3+\varepsilon^{-1}a^{\varrho\lambda}\partial_{\alpha\beta\lambda}\theta u^{\varepsilon}_{\varrho}=\partial_{\beta}w^{\varepsilon}_{\alpha}-a^{\varrho\lambda}\partial_{\alpha\lambda}\theta \partial_{\beta}u^{\varepsilon}_{\varrho}-\partial_{\beta}a^{\varrho\lambda}\partial_{\alpha\lambda}\theta u^{\varepsilon}_{\varrho}\\
&\rightarrow \partial_{\alpha\beta}u^0_3+\partial^y_{\alpha\beta}W+a^{\varrho\lambda}\partial_{\alpha\beta\lambda}\theta u^1_{\varrho}-\left(a^{\varrho\lambda}\partial_{\beta\lambda}\theta\partial_{\alpha}u^0_{\varrho}+a^{\varrho\lambda}\partial_{\alpha\lambda}\theta\partial_{\beta}u^0_{\varrho}+\partial_{\beta}a^{\varrho\lambda}\partial_{\alpha\lambda}\theta u^0_{\varrho}\right)\quad\text{as}\, \varepsilon\to 0.
\end{align*}

\end{proof}

\vspace{0.5 cm}

This is our first convergence theorem.
\vspace{0.5 cm}

\begin{teo}\label{twoscaleourtheorem}
Assume that (\ref{boundforce}) holds true. Then there exists a subsequence of $\{\vect{u}^{\varepsilon}\}_{\varepsilon>0}$ denoted again by $\{\vect{u}^{\varepsilon}\}_{\varepsilon>0}$, and functions
\begin{equation*}
\begin{cases}
u^0_{\alpha}\in H^1_0(\Omega),\quad \alpha=1,2,\quad u^0_3\in H^2_0(\Omega), &\\
u^1_{\alpha}\in L^2(\Omega; H^1(\dot{\mathcal{Y}})), \quad \alpha=1,2,\quad\text{and}\quad W\in L^2(\Omega; H^2(\dot{\mathcal{Y}})),
\end{cases}
\end{equation*}
such that as $\varepsilon$ tends to zero, for $i=1,2,3$ and $\alpha,\beta=1,2,$
\begin{align*}
u^{\varepsilon}_{i}\rightarrow u^0_i\quad\text{weakly in}\, H^1(\Omega),\quad& \nabla u^{\varepsilon}_{\alpha}\rightarrow \nabla u^0_{\alpha}+\nabla_y u^1_{\alpha}\quad\text{two-scale,}\\
\partial_{\alpha}u^{\varepsilon}_3\rightarrow \partial_{\alpha}u^0_3-a^{\varrho\lambda}\partial_{\alpha\lambda}\theta(y) u^0_{\varrho}\quad&\text{two-scale,}\\
\partial_{\alpha\beta}u^{\varepsilon}_{3}+\varepsilon^{-1}a^{\varrho\lambda}\partial_{\alpha\beta\lambda}\theta(x/\varepsilon)u^{\varepsilon}_{\varrho}&\rightarrow\partial_{\alpha\beta}u^0_3+\partial^y_{\alpha\beta}W+a^{\varrho\lambda}\partial_{\alpha\beta\lambda}\theta u^1_{\varrho}\\
&\quad-a^{\varrho\lambda}\partial_{\beta\lambda}\theta \partial_{\alpha}u^0_{\varrho}-a^{\varrho\lambda}\partial_{\alpha\lambda}\theta \partial_{\beta}u^0_{\varrho}-\partial_{\beta}a^{\varrho\lambda}\partial_{\alpha\lambda}\theta u^0_{\varrho}\quad\text{two-scale}.
\end{align*}
\end{teo}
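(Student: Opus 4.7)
The plan is to assemble this theorem as a corollary of the three preceding results: the a priori estimate (Theorem \ref{aprioriestimate}) and Lemmas \ref{lemmafundtwoscaleconv1}, \ref{lemma33}, and \ref{lemmaconv3}. The force bound (\ref{boundforce}) and Theorem \ref{aprioriestimate} supply $|\vect{u}^\varepsilon|_{H^1(\Omega)^3}\le C$ and the crucial scaled bound on $\partial_{\alpha\beta}u^\varepsilon_3+\varepsilon^{-1}a^{\varrho\lambda}\partial_{\alpha\lambda\beta}\theta(x_\varepsilon)u^\varepsilon_\varrho$ in $L^2(\Omega)$. These are exactly the hypotheses under which the three lemmas were stated, so the whole proof amounts to running them in sequence on a common subsequence.

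First I would apply Lemma \ref{lemmafundtwoscaleconv1} to extract a subsequence for which $\vect{u}^\varepsilon\rightharpoonup\vect{u}^0$ weakly in $H^1(\Omega)^3$ and $\nabla\vect{u}^\varepsilon\overset{2}{\rightharpoonup}\nabla\vect{u}^0+\nabla_y\vect{u}^1$, with $\vect{u}^0\in H^1_0(\Omega)^3$ and $\vect{u}^1\in L^2(\Omega;\dot{H}^1(\mathcal{Y}))^3$. This immediately gives the first displayed convergence of the theorem for $i=1,2,3$ and the two-scale convergence of $\nabla u^\varepsilon_\alpha$ for $\alpha=1,2$.

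Next I would invoke Lemma \ref{lemma33} on the same subsequence to upgrade the regularity of $u^0_3$ to $H^2_0(\Omega)$ and to identify $u^1_3(x,y)=-a^{\varrho\lambda}\partial_\lambda\theta(y)u^0_\varrho(x)$. Differentiating this identity in $y_\alpha$ gives $\partial^y_\alpha u^1_3=-a^{\varrho\lambda}\partial_{\alpha\lambda}\theta(y)u^0_\varrho(x)$ (the $x$-dependent coefficients $a^{\varrho\lambda}$ pass through the $y$-derivative), so the two-scale limit of $\partial_\alpha u^\varepsilon_3=(\nabla\vect{u}^\varepsilon)_{3\alpha}$ supplied by Lemma \ref{lemmafundtwoscaleconv1} rewrites as $\partial_\alpha u^0_3-a^{\varrho\lambda}\partial_{\alpha\lambda}\theta(y)u^0_\varrho$, which is exactly the second displayed convergence.

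Finally, Lemma \ref{lemmaconv3}, applied to a further subsequence if necessary, produces the function $W\in L^2(\Omega;H^2(\dot{\mathcal{Y}}))$ and delivers the two-scale limit of $\partial_{\alpha\beta}u^\varepsilon_3+\varepsilon^{-1}a^{\varrho\lambda}\partial_{\alpha\beta\lambda}\theta(x/\varepsilon)u^\varepsilon_\varrho$ verbatim as stated. The only bookkeeping point is that each of the three lemmas extracts its own subsequence; by a standard diagonal argument I would pass to a common subsequence on which all three conclusions hold simultaneously, and relabel it $\{\vect{u}^\varepsilon\}_{\varepsilon>0}$. The main (and essentially only) obstacle has already been absorbed into Lemmas \ref{lemma33} and \ref{lemmaconv3}, namely the algebraic identification of the corrector terms from the antisymmetric part of $\nabla_y w^1$; at this stage the theorem is little more than a bookkeeping statement collecting those results.
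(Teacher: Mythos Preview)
Your proposal is correct and matches the paper's own proof, which is the single line ``The proof is contained in Lemmas \ref{lemmafundtwoscaleconv1}--\ref{lemmaconv3}.'' You have simply spelled out the bookkeeping (successive subsequence extraction, identification of $\partial^y_\alpha u^1_3$ from Lemma \ref{lemma33}) that the paper leaves implicit; no diagonal argument is actually needed since only finitely many successive extractions occur.
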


\begin{proof}
The proof is contained in Lemmas \ref{lemmafundtwoscaleconv1}-\ref{lemmaconv3}.
\end{proof}

\vspace{1.0 cm}

\subsection{Two-scale problem}
\vspace{0.5 cm}

In the sequel we assume that the right hand sides of $\varepsilon-$problems (\ref{cartesian}) satisfy
\begin{equation}\label{weakconvergenceforce}
\vect{f}_{\varepsilon}\rightarrow \vect{f}_0\quad \text{weakly in}\, L^2(\Omega)^3\,\,\text{as}\,\,\varepsilon\to 0.
\end{equation}

This assmption eneables us to prove convergence in certain sense of $\{\vect{u}_{\varepsilon}\}_{\varepsilon>0}$ to a solution of the so called two-scale problem posed over the function space
\begin{equation}\label{spacefunctiontwsclimit}
\vect{X}=H^1_0(\Omega)^2\times H^2_0(\Omega)\times L^2(\Omega;\dot{H}^1(\mathcal{Y}))^2\times L^2(\Omega;\dot{H}^2(\mathcal{Y}))
\end{equation}

\vspace{0.5 cm}

We also  introduce some new notations; the indices $\alpha,\beta=1,2$, while $(\vect{v}, \vect{v}^1, V)$ belongs to $\vect{X}$:

\begin{align}
\gamma_{\alpha\beta}^{0}(\vect{v}, \vect{v}^1)&\label{gammaexpansion2021}=\gamma_{\alpha\beta}(\vect{v})+e^{y}_{\alpha\beta}(\vect{v}^1),\quad \text{with}\quad e^{y}_{\alpha\beta}(\vect{v}^1)=\displaystyle\frac{1}{2}\left(\partial^{y}_{\alpha}v^1_{\beta}+\partial^{y}_{\beta}v^1_{\alpha}\right),\\
M^{y}_{\alpha\beta}(\vect{v})&\nonumber=-a^{\varrho\lambda}\partial_{\lambda\alpha}\theta(y)\Gamma^{\sigma}_{\varrho\beta}v_{\sigma}+\partial_{\alpha}a^{\varrho\lambda}\partial_{\lambda\beta}\theta(y)v_{\varrho}-\partial_{\beta}a^{\varrho\lambda}\partial_{\alpha\lambda}\theta(y)v_{\varrho}\\
&\label{nexpansion1}\quad+\displaystyle\frac{\left(\partial_{\alpha2}\theta(y) \vect{k}_{13}+\partial_{\alpha1}\theta(y) \vect{k}_{32}\right)}{\sqrt{a}}a^{\varrho\lambda}\partial_{\lambda}a_{\beta}v_{\varrho},\\
N^{y}_{\alpha\beta}(\vect{v}^1)&\label{nexpansiony2}=a^{\varrho\lambda}\partial_{\alpha\lambda\beta}\theta(y)v^1_{\varrho}+a^{\varrho\lambda}[\partial_{\lambda\beta}\theta(y)\partial^{y}_{\alpha}v^1_{\varrho}+\partial_{\lambda\alpha}\theta(y)\partial_{\beta}^{y}v^1_{\varrho}]+b^{\varrho}_{\beta}\partial_{\alpha}^yv^1_{\varrho}+b^{\varrho}_{\alpha}\partial_{\beta}^yv^1_{\varrho},\\
\Gamma_{\alpha\beta}^0(\vect{v}, \vect{v}^1, V)&\label{Gammaexpansion20212}=\Gamma_{\alpha\beta}(\vect{v})+\partial^{y}_{\alpha\beta}V+N^y_{\alpha\beta}(\vect{v}^1)+M^{y}_{\alpha\beta}(\vect{v}).
\end{align}

\vspace{0.5 cm}

\begin{lemma}\label{lemma:convtensor2021}
There exists a subsequence of the sequence $\{\vect{u}^{\varepsilon}\}_{\varepsilon>0}$, denoted by the same index $\varepsilon$, and a function $[\vect{u}^0=(u_i^0), \vect{u}^1=(u^1_{\alpha}), W]$ in $\vect{X}$ such that for $\alpha, \beta=1,2,$
\begin{align}
\gamma^{\varepsilon}_{\alpha\beta}(\vect{u}^{\varepsilon})&\label{twoscalegammalimit}\overset{2}{\rightharpoonup}\gamma_{\alpha\beta}^0(\vect{u}^0,\vect{u}^1)-\partial_{\alpha\beta}\theta(y)u^0_3\quad\text{two-scale as}\,\,\varepsilon\to 0,\\
\Gamma_{\alpha\beta}^{\varepsilon}(\vect{u}^{\varepsilon})&\nonumber\overset{2}{\rightharpoonup}\Gamma_{\alpha\beta}^0(\vect{u}^0,\vect{u}^1,W)-a^{\lambda\varrho}\partial_{\varrho\alpha}\theta(y)\partial_{\lambda\beta}\theta(y)u^0_3\\
&\label{twoscalecapGammalimit}\quad-\left[b^{\lambda}_{\alpha}\partial_{\lambda\beta}\theta(y)+b^{\varrho}_{\beta}\partial_{\varrho\alpha}\theta(y)\right]u^0_3\quad\text{two-scale as}\,\,\varepsilon\to 0.
\end{align}

\end{lemma}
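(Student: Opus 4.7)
The plan is to insert $\vect{u}^{\varepsilon}$ into the expansions supplied by Theorem \ref{teo:332}, control the residuals and the apparently singular summand by the a priori bounds (\ref{boundresidualstraintensor})--(\ref{boundnorm}), and then pass to the two-scale limit piece by piece using the compactness results of Theorem \ref{twoscaleourtheorem} together with Lemma \ref{lemma:gamma1} and the standard product rule for two-scale convergence against smooth $Y$-periodic oscillating factors.

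For (\ref{twoscalegammalimit}) I would write
\[
\gamma^{\varepsilon}_{\alpha\beta}(\vect{u}^{\varepsilon})=\gamma_{\alpha\beta}(\vect{u}^{\varepsilon})-\partial_{\alpha\beta}\theta(x_{\varepsilon})u^{\varepsilon}_3+\varepsilon P^{\varepsilon}_{\alpha\beta}(\vect{u}^{\varepsilon})
\]
and handle each summand separately. The last term is $O(\varepsilon)$ in $L^2(\Omega)$ by the bound on $P^{\varepsilon}$, so it vanishes in the two-scale limit. Since $\gamma_{\alpha\beta}(\vect{u})$ is linear in $\nabla\vect{u}$ and $\vect{u}$ with smooth $x$-dependent coefficients, Theorem \ref{twoscaleourtheorem} gives $\gamma_{\alpha\beta}(\vect{u}^{\varepsilon})\overset{2}{\rightharpoonup}\gamma^{0}_{\alpha\beta}(\vect{u}^0,\vect{u}^1)$ directly. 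The remaining product combines the bounded oscillating factor $\partial_{\alpha\beta}\theta(x/\varepsilon)$ with $u^{\varepsilon}_3\to u^0_3$ (strongly in $L^2$ by Rellich), so Lemma \ref{lemma:gamma1} yields $\partial_{\alpha\beta}\theta(x_{\varepsilon})u^{\varepsilon}_3\overset{2}{\rightharpoonup}\partial_{\alpha\beta}\theta(y)u^0_3$, and assembling the three pieces proves (\ref{twoscalegammalimit}).

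The main obstacle is (\ref{twoscalecapGammalimit}), where Theorem \ref{teo:332} produces the singular summand $\varepsilon^{-1}a^{\varrho\lambda}\partial_{\alpha\lambda\beta}\theta(x_{\varepsilon})u^{\varepsilon}_{\varrho}$, which is not individually $L^2$-bounded. The key idea is to group this term with the $\partial_{\alpha\beta}u^{\varepsilon}_3$ contribution hidden inside $\Gamma_{\alpha\beta}(\vect{u}^{\varepsilon})$; their sum is precisely the quantity controlled by (\ref{boundnorm}) and whose two-scale limit is furnished by Theorem \ref{twoscaleourtheorem}, namely $\partial_{\alpha\beta}u^0_3+\partial^{y}_{\alpha\beta}W+a^{\varrho\lambda}\partial_{\alpha\beta\lambda}\theta\,u^1_{\varrho}$ plus three explicit $u^0$-correction terms. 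The remaining pieces of $\Gamma_{\alpha\beta}(\vect{u}^{\varepsilon})$ involve only $\nabla\vect{u}^{\varepsilon}$ and $\vect{u}^{\varepsilon}$ weighted by smooth $x$-dependent coefficients, and they two-scale converge to $\Gamma_{\alpha\beta}(\vect{u}^0)+b^{\varrho}_{\beta}\partial^{y}_{\alpha}u^1_{\varrho}+b^{\varrho}_{\alpha}\partial^{y}_{\beta}u^1_{\varrho}$. Each summand of $Q^{\varepsilon}_{\alpha\beta}(\vect{u}^{\varepsilon})$ is a product of smooth $Y$-periodic oscillating factors (derivatives of $\theta(x/\varepsilon)$) with either $\partial_{\alpha}u^{\varepsilon}_{\varrho}$ (weakly two-scale convergent by Theorem \ref{twoscaleourtheorem}) or $u^{\varepsilon}_i$ (strongly $L^2$-convergent), and passes to the limit cleanly by the same device. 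Finally $\varepsilon R^{\varepsilon}_{\alpha\beta}(\vect{u}^{\varepsilon})\to 0$ in $L^2$ by (\ref{boundresidualstraintensor})--(\ref{boundnorm}).

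What remains is bookkeeping: the corrections $-a^{\varrho\lambda}\partial_{\beta\lambda}\theta\,\partial_{\alpha}u^0_{\varrho}-a^{\varrho\lambda}\partial_{\alpha\lambda}\theta\,\partial_{\beta}u^0_{\varrho}$ coming from the $W$-grouping must cancel exactly the $u^0$-parts produced by the first line of $Q^{\varepsilon}$, while the $u^0_3$-terms $-a^{\lambda\varrho}\partial_{\varrho\alpha}\theta\,\partial_{\lambda\beta}\theta\,u^0_3$ and $-[b^{\lambda}_{\alpha}\partial_{\lambda\beta}\theta+b^{\varrho}_{\beta}\partial_{\varrho\alpha}\theta]u^0_3$ in the conclusion come directly from the second and third summands of $Q^{\varepsilon}$; the remaining pieces of $Q^{\varepsilon}$ and of $\Gamma_{\alpha\beta}(\vect{u}^{\varepsilon})$ reassemble into $N^{y}_{\alpha\beta}(\vect{u}^1)+M^{y}_{\alpha\beta}(\vect{u}^0)$. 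Apart from the singular grouping, this final matching is elementary symbol-chasing; the essential content of the proof is the identification of the singular combination already handled by Theorem \ref{twoscaleourtheorem}.
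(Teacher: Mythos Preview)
Your proposal is correct and follows essentially the same approach as the paper: the paper's proof is a one-line sketch that invokes the asymptotic expansions of Theorem~\ref{teo:332}, the two-scale compactness results, and the definitions (\ref{gammaexpansion2021})--(\ref{Gammaexpansion20212}), and your plan simply makes this explicit, including the key step of grouping the singular summand $\varepsilon^{-1}a^{\varrho\lambda}\partial_{\alpha\lambda\beta}\theta(x_{\varepsilon})u^{\varepsilon}_{\varrho}$ with $\partial_{\alpha\beta}u^{\varepsilon}_3$ so that Theorem~\ref{twoscaleourtheorem} applies.
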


\vspace{0.5 cm}

\begin{proof}
The proof follows directly of (\ref{weakconvergenceforce}), Theorem \ref{teo:3.4}, asymptotic expansions in Theorem \ref{teo:332} for $\gamma^{\varepsilon}_{\alpha\beta}(\vect{u})$ and $\Gamma^{\varepsilon}_{\alpha\beta}(\vect{u})$ and definitions given in (\ref{gammaexpansion2021}) and (\ref{Gammaexpansion20212}).
\end{proof}

\vspace{0.5 cm}

For such tripples and for functions $(\vect{z}, \vect{z}^1, Z)$ belonging to $\vect{X}$ we define the form $B^0(\cdot, \cdot)$ by

\begin{align}
B^0&\nonumber\left[(\vect{z}, \vect{z}^1, Z), (\vect{\psi}, \vect{\varphi}, \varphi_3)\right]\\
&\label{ourlimitproblem}=\displaystyle\int_{\Omega}\int_{\mathcal{Y}}a^{\alpha\beta\varrho\sigma}\left[d\gamma^0_{\varrho\sigma}(\vect{z}, \vect{z}^1)\gamma^0_{\alpha\beta}(\vect{\psi}, \vect{\varphi})+\displaystyle\frac{d^3}{3}\Gamma^0_{\varrho\sigma}(\vect{z}, \vect{z}^1, Z)\Gamma^0_{\alpha\beta}(\vect{\psi}, \vect{\varphi}, \varphi_3)\right]\sqrt{a}\, dy\,dx,
\end{align}
where
\begin{equation}\label{elasticitytensor}
a^{\alpha\beta\varrho\sigma}=\displaystyle\frac{4\lambda\mu}{\lambda+2\mu}a^{\alpha\beta}a^{\varrho\sigma}+2\mu\left[a^{\alpha\varrho}a^{\beta\sigma}+a^{\alpha\sigma}a^{\beta\varrho}\right].
\end{equation}

\vspace{0.5 cm}

Now we define the two-scale problem as the following boundary-value problem written in a week form:\\
for given $\vect{f}^0$ in $L^2(\Omega)^3$ find $(\vect{u}^0,\widetilde{\vect{u}}^1, U)\in \vect{X}$ such that
\begin{equation}\label{twoscaleproblemnew}
B^0[(\vect{u}^0,\widetilde{\vect{u}}^1, U),(\vect{z}, \vect{z}^1, Z)]=\displaystyle\int_{\Omega}\vect{f}^0\cdot\vect{z}\sqrt{a}\,dx,\quad\forall (\vect{z}, \vect{z}^1, Z)\in\vect{X}.
\end{equation}

\vspace{1.0 cm}

\section{Decoupling of the two-scale problem}

\vspace{0.5 cm}

\begin{lemma}\label{ourresult1}
Function $(u^0_{i})$ is unique solution in $H^1_0(\Omega)^2\times H^2_0(\Omega)$ of the following variational equation:
\begin{align}
&\nonumber\displaystyle\int_{\Omega}a^{\alpha\beta\varrho\sigma}\left\{d\gamma_{\varrho\sigma}(\vect{u}^0)\gamma_{\alpha\beta}(\vect{z})+\displaystyle\frac{d^3}{3}\Gamma_{\varrho\sigma}(\vect{u}^0)\Gamma_{\alpha\beta}(\vect{z})+\displaystyle\frac{d^3}{3}\int_{\mathcal{Y}}M^y_{\varrho\sigma}(\vect{u}^0)M^y_{\alpha\beta}(\vect{z})\,dy\right\}\sqrt{a}\,\,dx\\
\quad &\label{ourtwoscaleproblem1}=\displaystyle\int_{\Omega}\vect{f}^0\cdot\vect{z}\sqrt{a}\,dx,\,\forall \vect{z}\in H^1_0(\Omega)^2\times H^2_0(\Omega).
\end{align}
\end{lemma}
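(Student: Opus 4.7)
The plan is to derive (\ref{ourtwoscaleproblem1}) by decoupling the two-scale problem (\ref{twoscaleproblemnew}): treat the microscopic correctors $(\widetilde{\vect{u}}^1, U)$ as slave variables eliminated pointwise in $x$ via cell problems on $\mathcal{Y}$, and read off the reduced macroscopic equation satisfied by $\vect{u}^0$. First I would test (\ref{twoscaleproblemnew}) against functions of the form $(0, \vect{z}^1, Z)$ with arbitrary $\vect{z}^1 \in L^2(\Omega;\dot{H}^1(\mathcal{Y}))^2$ and $Z \in L^2(\Omega;\dot{H}^2(\mathcal{Y}))$. Since the right-hand side vanishes on such tests, this yields two variational identities on the unit cell (for a.e.\ $x \in \Omega$) whose data is linear in $\vect{u}^0$ and its first and second derivatives. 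Ellipticity of the Koiter tensor $a^{\alpha\beta\varrho\sigma}$ together with Lax--Milgram on the quotient spaces $\dot{H}^1(\mathcal{Y})^2 \times \dot{H}^2(\mathcal{Y})$ gives unique solvability of this coupled cell problem, so that $(\widetilde{\vect{u}}^1, U)$ are determined as linear functionals of $\vect{u}^0$.

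The second step is to substitute the cell correctors back into (\ref{twoscaleproblemnew}) with test function $(\vect{z}, 0, 0)$ and carry out the $y$-integrations. For the membrane part, since $\gamma_{\alpha\beta}(\vect{z})$ is $y$-independent, the contribution of $e^y_{\varrho\sigma}(\widetilde{\vect{u}}^1)$ integrates to zero over $\mathcal{Y}$, leaving the clean macroscopic term $a^{\alpha\beta\varrho\sigma}\, d\,\gamma_{\varrho\sigma}(\vect{u}^0)\,\gamma_{\alpha\beta}(\vect{z})\sqrt{a}$. For the bending part, the cell identities obtained above express an $L^2(\mathcal{Y})$-orthogonality between $\Gamma^0_{\varrho\sigma}(\vect{u}^0, \widetilde{\vect{u}}^1, U)$ and the admissible variations $\partial^y_{\alpha\beta}Z$ and $N^y_{\alpha\beta}(\vect{z}^1)$. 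Using these orthogonalities to eliminate the cross-terms, the contraction of $\Gamma^0(\vect{u}^0,\widetilde{\vect{u}}^1,U)$ with $\Gamma(\vect{z}) + M^y(\vect{z})$ reduces to $\Gamma_{\varrho\sigma}(\vect{u}^0)\Gamma_{\alpha\beta}(\vect{z}) + \int_{\mathcal{Y}} M^y_{\varrho\sigma}(\vect{u}^0)M^y_{\alpha\beta}(\vect{z})\,dy$, matching (\ref{ourtwoscaleproblem1}).

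The main obstacle is the bookkeeping of the bending contraction: one must check carefully which cross-couplings vanish and which survive. The vanishing pieces rely on the zero-$\mathcal{Y}$-mean of $y$-derivatives of periodic quantities (in particular second derivatives of $\theta$), whereas the surviving piece $\int_{\mathcal{Y}} M^y(\vect{u}^0)M^y(\vect{z})\,dy$ is irreducible because $M^y(\vect{u}^0)$ enters the tensor $\Gamma^0$ as a direct macroscopic forcing multiplied by a $y$-dependent coefficient and cannot be fully cancelled within the corrector space $\dot{H}^1(\mathcal{Y})^2 \times \dot{H}^2(\mathcal{Y})$. Once this reduction is carried out, uniqueness of $\vect{u}^0 \in H^1_0(\Omega)^2 \times H^2_0(\Omega)$ follows by Lax--Milgram applied to the left-hand side of (\ref{ourtwoscaleproblem1}): the classical Koiter bilinear form involving $\gamma$ and $\Gamma$ is coercive on clamped shells (Koiter rigidity combined with ellipticity of $a^{\alpha\beta\varrho\sigma}$), and the additional term $\int_{\mathcal{Y}} |M^y(\vect{u}^0)|^2\,dy \geq 0$ only strengthens coercivity, so the variational equation admits a unique solution.
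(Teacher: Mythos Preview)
Your route is considerably longer than the paper's and, as written, does not close. The paper's entire argument is a single substitution: take $\vect{z}^1=0$ and $Z=0$ in the two--scale problem (\ref{twoscaleproblemnew}). With this choice one has $\gamma^0_{\alpha\beta}(\vect{z},0)=\gamma_{\alpha\beta}(\vect{z})$ and $\Gamma^0_{\alpha\beta}(\vect{z},0,0)=\Gamma_{\alpha\beta}(\vect{z})+M^y_{\alpha\beta}(\vect{z})$, and (\ref{ourtwoscaleproblem1}) falls out after observing that $\int_{\mathcal{Y}}e^y_{\alpha\beta}(\widetilde{\vect u}^1)\,dy=0$ and $\int_{\mathcal{Y}}M^y_{\alpha\beta}(\cdot)\,dy=0$ (every entry of $M^y$ is a linear combination of second $y$--derivatives of the periodic function $\theta$), together with the companion homogeneous relation (\ref{freeterms}) which makes the corrector pair $(\widetilde{\vect u}^1,U)$ disappear from the macroscopic balance. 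No cell problem is solved for this lemma; the local problems (\ref{localproblems}) enter only afterwards and for a different purpose.

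Your decoupling scheme is the generic homogenization template, but the key orthogonality step is misapplied. Testing (\ref{twoscaleproblemnew}) with $(0,\vect{z}^1,Z)$ yields the orthogonality of $a^{\alpha\beta\varrho\sigma}\Gamma^0_{\varrho\sigma}(\vect{u}^0,\widetilde{\vect u}^1,U)$ against tensors of the form $\partial^y_{\alpha\beta}Z+N^y_{\alpha\beta}(\vect{z}^1)$; it says nothing about pairing with $\Gamma_{\alpha\beta}(\vect{z})+M^y_{\alpha\beta}(\vect{z})$, which is what appears when you then test with $(\vect{z},0,0)$. So the cell identity cannot, by itself, kill the cross--terms $\int_{\mathcal{Y}}a^{\alpha\beta\varrho\sigma}\bigl[\partial^y_{\varrho\sigma}U+N^y_{\varrho\sigma}(\widetilde{\vect u}^1)\bigr]\bigl[\Gamma_{\alpha\beta}(\vect{z})+M^y_{\alpha\beta}(\vect{z})\bigr]\,dy$ that stand between you and (\ref{ourtwoscaleproblem1}). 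In the standard decoupling one would instead arrive at an effective bilinear form still carrying corrector contributions; the clean formula (\ref{ourtwoscaleproblem1}) emerges here only because, in this particular two--scale system, those contributions are absent --- a structural fact you never verify. The paper's direct substitution bypasses the whole detour.
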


\vspace{0.5 cm}

The proof follows directly from (\ref{twoscaleproblemnew}) by the following choice of test-functions:
\begin{equation*}
\vect{z}^1=Z=0.
\end{equation*}

\vspace{0.5 cm}
The rest of the variational equation (\ref{twoscaleproblemnew}) for unknowns $(\widetilde{\vect{u}}^1, U)$ becomes
\begin{align}
&\nonumber\displaystyle\int_{\Omega}\int_{\mathcal{Y}}a^{\alpha\beta\varrho\sigma}\left\{d\, e^y_{\varrho\sigma}(\widetilde{\vect{u}}^1)e^y_{\alpha\beta}(\vect{z}^1)+\displaystyle\frac{d^3}{3}\left[\partial^y_{\varrho\sigma}U\,+N^y_{\varrho\sigma}(\widetilde{\vect{u}}^1)\right]\left[\partial^y_{\alpha\beta}Z\,+N^y_{\alpha\beta}(\vect{z}^1)\right]\right\}\sqrt{a}=0,\\
&\label{freeterms}\,\forall (\vect{z}^1,Z)\in L^2(\Omega;\dot{H}^1(\mathcal{Y}))^2\times L^2(\Omega;\dot{H}^2(\mathcal{Y})).
\end{align}

\vspace{1.0 cm}

Let us introduce the function space
\begin{equation}\label{spacefordecouplesys}
\mathcal{H}=(\dot{H}^1(\mathcal{Y}))^2\times \dot{H}^2(\mathcal{Y}).
\end{equation}
$\mathcal{H}$ is a Hilbert space endowed with the norm
\begin{equation*}
\left|\left|\left(\vect{v}, V\right)\right|\right|_{\mathcal{H}}=\left(|\vect{v}|^2_{H^1(\mathcal{Y})^2}+\displaystyle\sum_{\alpha, \beta=1}^{2}|\partial_{\alpha\beta}V|^2_{L^2(\mathcal{Y})}\right)^{1/2}.
\end{equation*}

We define  the bilinear form $\mathcal{B}:\mathcal{H}\times\mathcal{H}\to\RR $

\begin{align}
\mathcal{B}\left[\left(\vect{v}, V\right),\left(\vect{z}, Z\right)\right]&\nonumber=\displaystyle\int_{\mathcal{Y}}a^{\alpha\beta\varrho\sigma}\left\{\right.d\, e^y_{\varrho\sigma}(\vect{v})e^y_{\alpha\beta}(\vect{z})+\displaystyle\frac{d^3}{3}\left[\partial^y_{\varrho\sigma}V\,+N^y_{\varrho\sigma}(\vect{v})\right]\\
&\label{equationdecouplesys1}\displaystyle\left.\times\left[\partial^y_{\alpha\beta}Z\,+N^y_{\alpha\beta}(\vect{z})\right]\right\}\sqrt{a}.
\end{align}

For $\zeta, \eta\in\{1,2\}$ and $\left(\vect{z}, Z\right)\in\mathcal{H}$ we set

\begin{equation}\label{eqcdecuple2}
F_{\xi\eta}\left(\vect{z}, Z\right)=\displaystyle\int_{\mathcal{Y}}\frac{d^3}{3}a^{\alpha\beta\xi\eta}\left[a^{\varrho\lambda}\partial_{\xi\lambda\eta}\theta+\partial_{\xi}b^{\varrho}_{\eta}+\partial_{\eta}b^{\varrho}_{\xi}\right]z_{\varrho}
\end{equation}
$F_{\xi\eta}$ is linear and bounded functional on $\mathcal{H}$. It follows from periodicity of the test functions that $F_{\xi\eta}$ can be written in the form
\begin{equation*}
F_{\xi\eta}\left(\vect{z}, Z\right)=-\displaystyle\int_{\mathcal{Y}}\frac{d^3}{3}a^{\alpha\beta\xi\eta}\left[N^y_{\xi\eta}(\vect{z})\right].
\end{equation*}
Now for $\xi, \eta\in\{1,2\}$ we define the following local problems:

\begin{equation}\label{localproblems}
\begin{cases}
\left(\widetilde{\vect{\Phi}}^{\xi\eta}, \Phi^{\xi\eta}\right)\in \mathcal{H}, &\\
\mathcal{B}\left[\left(\widetilde{\vect{\Phi}}^{\xi\eta}, \Phi^{\xi\eta}\right), (\vect{z}, Z)\right]=F_{\xi \eta}(\vect{z}, Z),\quad (\vect{z}, Z)\in \mathcal{H}. & 
\end{cases}
\end{equation}

\vspace{1.0 cm}

\begin{teo}\label{teobilinearform1}
For any $\xi, \eta\in\{1,2\}$ problem (\ref{localproblems}) has a unique solution
\begin{equation*}
\left(\widetilde{\vect{\Phi}}^{\xi\eta}, \Phi^{\xi\eta}\right)\in \mathcal{H}\cap \left[\dot{H}^2(\mathcal{Y})^2\times \dot{H}^4(\mathcal{Y})\right].
\end{equation*}
\end{teo}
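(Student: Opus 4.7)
The plan is to apply Lax--Milgram to obtain existence and uniqueness of $(\widetilde{\vect{\Phi}}^{\xi\eta},\Phi^{\xi\eta})\in\mathcal{H}$, and then to upgrade the regularity through the standard elliptic regularity theory on the torus. Continuity of $\mathcal{B}$ on $\mathcal{H}\times\mathcal{H}$ is immediate from the boundedness of the coefficients $a^{\alpha\beta\varrho\sigma}$, $a^{\varrho\lambda}$, $b^\varrho_\alpha$, $\sqrt{a}$ and of $\theta$ together with its derivatives up to third order, since the integrand involves only $e^y(\vect{v})$, $\partial^y V$, and the operator $N^y(\vect{v})$ which is a first-order differential operator in $\vect{v}$ with smooth $Y$-periodic coefficients. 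Similarly, $F_{\xi\eta}$ is a bounded linear functional on $\mathcal{H}$.

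The main obstacle is the coercivity of $\mathcal{B}$ on $\mathcal{H}$, which I would prove by contradiction. If coercivity fails, there is a sequence $(\vect{v}^n,V^n)\in\mathcal{H}$ with $\|(\vect{v}^n,V^n)\|_{\mathcal{H}}=1$ and $\mathcal{B}[(\vect{v}^n,V^n),(\vect{v}^n,V^n)]\to 0$. Using the uniform ellipticity of the two-dimensional shell elasticity tensor $a^{\alpha\beta\varrho\sigma}$ (inherited from positivity of the Lam\'e constants via \eqref{elasticitytensor}), I deduce that
\begin{equation*}
\int_{\mathcal{Y}} e^y_{\alpha\beta}(\vect{v}^n)e^y_{\alpha\beta}(\vect{v}^n)\,dy\to 0\quad\text{and}\quad\int_{\mathcal{Y}}\bigl|\partial^y_{\alpha\beta}V^n+N^y_{\alpha\beta}(\vect{v}^n)\bigr|^2 dy\to 0.
\end{equation*}
Korn's inequality on the flat torus for zero-mean functions, together with the Poincar\'e inequality in $\dot{H}^1(\mathcal{Y})$, then yields $\vect{v}^n\to 0$ in $H^1(\mathcal{Y})^2$. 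Since $N^y_{\alpha\beta}(\vect{v}^n)$ involves only $\vect{v}^n$ and its first $y$-derivatives multiplied by bounded coefficients, we get $N^y(\vect{v}^n)\to 0$ in $L^2(\mathcal{Y})$, and hence $\partial^y_{\alpha\beta}V^n\to 0$ in $L^2(\mathcal{Y})$. Combined, this contradicts $\|(\vect{v}^n,V^n)\|_{\mathcal{H}}=1$. Thus Lax--Milgram produces the unique weak solution in $\mathcal{H}$.

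To obtain the additional regularity $(\widetilde{\vect{\Phi}}^{\xi\eta},\Phi^{\xi\eta})\in\dot{H}^2(\mathcal{Y})^2\times\dot{H}^4(\mathcal{Y})$, I would read off the associated Euler--Lagrange system. Varying $(\vect{z},Z)$ separately and integrating by parts gives a coupled system on $\mathcal{Y}$ of the form: an elasticity-type equation $-\partial^y_\beta\bigl(d\,a^{\alpha\beta\varrho\sigma}e^y_{\varrho\sigma}(\widetilde{\vect{\Phi}})\sqrt{a}\bigr)+(\text{first-order terms})=G_1$, coupled to a fourth-order (Kirchhoff--Love) equation $\partial^y_{\alpha\beta}\bigl(\tfrac{d^3}{3}a^{\alpha\beta\varrho\sigma}[\partial^y_{\varrho\sigma}\Phi+N^y_{\varrho\sigma}(\widetilde{\vect{\Phi}})]\sqrt{a}\bigr)=G_2$ with smooth $Y$-periodic right-hand sides coming from $F_{\xi\eta}$ and the lower-order parts of $N^y$. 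Since the principal parts decouple (elliptic $2$nd-order system in $\widetilde{\vect{\Phi}}$ and a biharmonic-type operator in $\Phi$), one can bootstrap: a standard $H^2$ regularity result on the torus for the elasticity part gives $\widetilde{\vect{\Phi}}\in\dot{H}^2(\mathcal{Y})^2$; then the right-hand side of the fourth-order equation for $\Phi$ lies in $L^2(\mathcal{Y})$, and elliptic regularity for the biharmonic-type operator on the torus yields $\Phi\in\dot{H}^4(\mathcal{Y})$. The zero-mean condition is preserved at each step since the equations are posed on $\dot H^k(\mathcal{Y})$ spaces. I expect the coercivity argument to be the most delicate point; the regularity step is essentially bookkeeping once one notes that the elliptic system on $\mathcal{Y}$ has no boundary and smooth coefficients.
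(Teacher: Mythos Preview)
Your approach is essentially the same as the paper's: the paper's proof is a two-line sketch invoking Lax--Milgram together with the Korn inequality for $\mathcal{H}$-ellipticity of $\mathcal{B}$, and you have filled in exactly those details (continuity, coercivity via contradiction and Korn on the torus). Your write-up in fact goes further than the paper by supplying a regularity bootstrap for the claimed $\dot{H}^2\times\dot{H}^4$ conclusion, which the paper's proof does not address at all.
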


\begin{proof}
The proof follows from an application of the Lax-Milgram theorem which guarantee the existence and the uniqueness of solution. To prove $\mathcal{H}-$ellipticity of $\mathcal{B}$ we can use the Korn inequality. 
\end{proof}

\vspace{1.0 cm}

\section*{Acknowledgments}

The first author was supported partially by Agencia Nacional de Investigaci\'on y Desarrollo de Chile (ANID) through Fondecyt Postdoctorado 2023 Grant No. 3230202 and also thanks the generous hospitality provided by the Department of Mathematics and Mathematical Statistics at Ume\r{a} University of Sweden, particularly by  Sebastian Throm,  \r{A}ke Br\"annstr\"om and Konrad Abramowicz.

\vspace{1cm}

\newpage

\vspace{1.5cm}

\end{document}